\definecolor{webgreen}{rgb}{0,.5,0}
\definecolor{webbrown}{rgb}{.6,0,0}
\theoremstyle{plain}
\newtheorem{theorem}{Theorem}
\newtheorem{corollary}[theorem]{Corollary}
\newtheorem{lemma}[theorem]{Lemma}
\newtheorem{proposition}[theorem]{Proposition}
\theoremstyle{definition}
\newtheorem{example}{Example}
\theoremstyle{remark}
\newtheorem*{remark*}{Remark}
\begin{document}

\title{Chip-Firing on Infinite $k$-ary Trees}

\author{Dillan Agrawal}
\author{Selena Ge}
\author{Jate Greene}
\author{Dohun Kim}
\author{Rajarshi Mandal}
\author{Tanish Parida}
\author{Anirudh Pulugurtha}
\author{Gordon Redwine}
\author{Soham Samanta}
\author{Albert Xu}
\affil{PRIMES STEP}
\author{Tanya Khovanova}
\affil{MIT}
%\date

\maketitle

\begin{abstract}
We use an infinite $k$-ary tree with a self-loop at the root as our underlying graph. We consider a chip-firing process starting with $N$ chips at the root. We describe the stable configurations. We calculate the number of fires for each vertex and the total number of fires. We study a sequence of the number of root fires for a given $k$ as a function of $N$ and study its properties. We do the same for the total number of fires.
\end{abstract}

\section{Introduction}

\subsection{History and Background}

In 1975, Arthur Engel \cite{Eng75} introduced the probabilistic abacus, an early version of chip-firing. In 1986, Joel Spencer \cite{Spe86} independently discovered one-dimensional chip-firing. In 1991, Björner, Lovász, and Shor introduced a chip-firing game on graphs.

The chip-firing game is played on a graph and is an important part of structural combinatorics. In this game, each vertex has a certain number of chips and can fire if the number of chips is greater than or equal to its degree. When fired, a vertex sends a single chip to each neighbor. The game ends when we can no longer fire any vertices. The resulting configuration is called a stable configuration.

Chip-firing is similar to Abelian sandpiles, which were introduced in 1987 by Bak, Tang, and Wiesenfeld \cite{BTW87}. In Abelian sandpiles, each place on a finite grid has a slope. Each slope eventually increases as grains of sand are randomly placed on the grid. When the slope reaches a certain height, they collapse and split the sand grains evenly among their neighbors on the grid. Abelian sandpiles were the first dynamical system discovered to have self-organized criticality. 

Chip-firing has deep connections to the Tutte polynomial and group theory. More details can be found in Merino's survey from 2005 \cite{Mer05}.

More recently, Musiker and Nguyen \cite{MN23} have discovered many of the properties of chip-firing on infinite binary trees when starting with $2^n-1$ chips at the root in both labeled and unlabeled chip-firing. Inagaki, Khovanova, and Luo \cite{IKL24} have also examined infinite binary trees. They found formulae for the total number of times each vertex fires, the total number of times the root fires, and the total number of fires. 

Continuing the work of Inagaki, Khovanova, and Luo, we generalize some of the formulae they found to infinite $k$-ary trees. We calculate the number of times each vertex fires, the number of times the tree's root fires, and the total number of fires in a $k$-ary tree. We also calculate and study many old and new integer sequences.

In Section~\ref{sec:prelim}, we provide preliminary definitions and results that are important for the rest of the paper. The graph we use is a rooted, undirected infinite $k$-ary tree with a self-loop at the root, and our chip-firing game starts with $N$ chips at the root.

In Section~\ref{sec:stableconfs}, we show that if we start with $N$ chips at the root of our tree, the stable configuration will form a finite tree of height $n$, where $n$ is the integer such that $\frac{k^{n} - 1}{k - 1} \leq N \leq \frac{k^{n + 1} - k}{k - 1}$. The number of chips on a vertex in the stable configuration is dependent only on its layer and is a function of the $k$-ary expansion of $N - \frac{k^{n}-1}{k-1}$.

In Section~\ref{sec:vertexfires}, we calculate the number of times each vertex fires, given the total number of chips ($N$). In Section~\ref{sec:rootfires}, we use the formula from the previous section to explore the number of root fires in more detail. Given $k$, the number of root fires as a function of $N$---the total number of chips---has an interesting structure. Its first and second difference sequences exhibit fractal properties, which we study in more detail.

In Section~\ref{sec:totalfires}, we calculate the total number of fires and the corresponding difference sequences. We consider a case for $N = \frac{k^n-1}{k-1}$, which plays a special role in our paper. We explain the connection between our sequences and schizophrenic numbers, which are numbers combining properties of rational and irrational numbers.

\section{Preliminaries}
\label{sec:prelim}

This paper studies a chip-firing game on a specific graph: an infinite undirected rooted $k$-ary tree with a self-loop at the root.

\subsection{The underlying graph}

In a \textit{rooted tree}, we distinguish one vertex as the \textit{root}. Every other vertex $v$ has exactly one \textit{parent} vertex $v_p$, defined as the first vertex after $v$ in the shortest path from $v$ to the root. The vertex $v$ is said to be a \textit{child} of the vertex $v_p$. Any vertex other than vertex $v$ on the shortest path from $v$ to root is an \textit{ancestor} of $v$, while $v$ is a \textit{descendant} of each of its ancestors.

\begin{figure}[ht!]
\begin{center}
\includegraphics[scale=0.15]{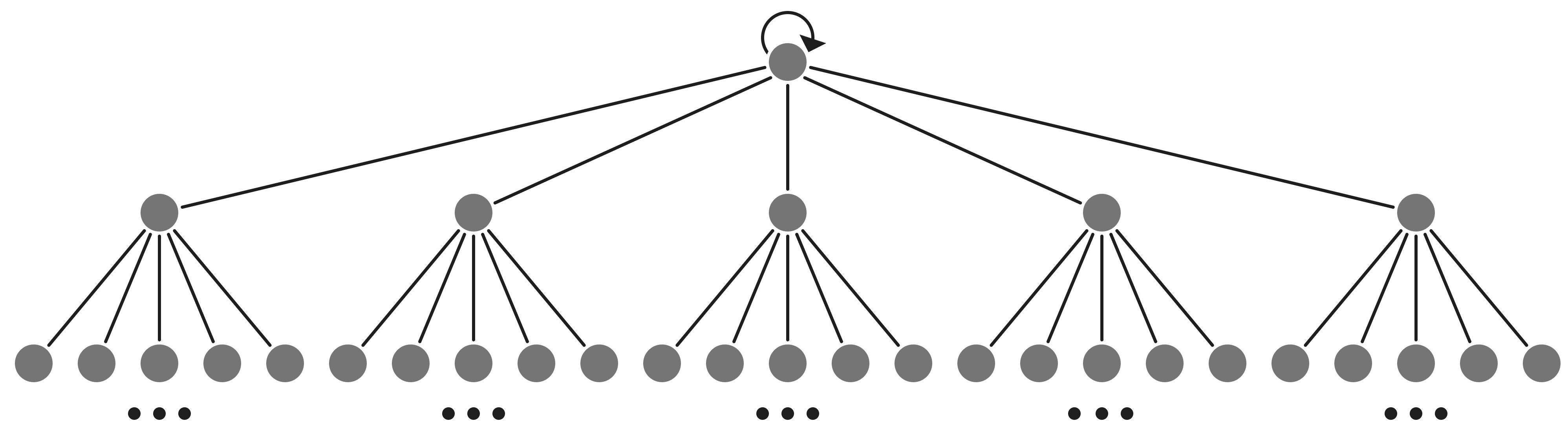}
\end{center}
\caption{An infinite undirected rooted 5-ary tree with a self-loop at the root}
\label{fig:5tree}
\end{figure}

We define a vertex $v$ as being on \textit{layer} $i+1$ if the shortest path from the root to $v$ traverses $i$ edges. By this definition, the root $r$ is on layer 1.

Sometimes, we need to refer to a subgraph of our underlying graph. For this, we need some more definitions. 

The \textit{height} of a tree is the length of the longest path from the root to a leaf. In a \textit{perfect} $k$-ary tree, every non-leaf vertex has $k$ children, and all leaves are the same distance from the root. In other words, all leaves are located on the same layer $h+1$, where $h$ is the height of such a tree.

\subsection{Chip-firing process}

We consider a standard unlabeled chip-firing game \cite{Kli19}, where some number of indistinguishable chips are placed at each vertex. A vertex can \textit{fire} if it has at least the same number of chips as its degree. In our case, a vertex can \textit{fire} when it has at least $k+1$ chips. When a vertex \textit{fires}, it sends one chip along each edge. We consider the chip-firing game where the \textit{initial state} has $N$ chips at the root and no chips anywhere else. The vertices fire until no firing is possible.

A \textit{stable configuration} is the distribution of chips on the graph where no vertex can fire; in other words, in a stable configuration, every vertex has fewer chips than its degree. In our case, we reach a stable configuration when every vertex has at most $k$ chips.

The following theorem, proved in \cite{BLS91}, describes when the chip-firing game may stabilize. It applies to any connected undirected graph $G=(V,E)$, where $V$ is the set of vertices and $E$ is the set of edges. We denote the number of vertices and edges by $|V|$ and $|E|$, respectively.

\begin{theorem}[Theorem 2.3 in \cite{BLS91}]
Let $N$ be the total number of chips. We have:
\begin{itemize}
    \item If $N > 2|E| - |V|$, then the game is infinite and never reaches a stable configuration.
    \item If $N < |E|$, then the game is finite, reaching a stable configuration.
    \item If $|E| \leq N \leq 2|E| - |V|$, then there exists an initial chip configuration that leads to an infinite game and a configuration that leads to a finite game.
\end{itemize}
\end{theorem}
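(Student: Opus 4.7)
The plan is to dispatch the three cases separately, using chip conservation, a connectedness lemma, and a careful counting argument within a recurrent firing cycle.

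Case 1 ($N > 2|E|-|V|$) is a one-line counting argument. Any stable configuration $c$ satisfies $c(v) \leq \deg(v)-1$ at every vertex, so $\sum_v c(v) \leq \sum_v(\deg(v)-1) = 2|E|-|V|$. Since firing preserves the total chip count, $N>2|E|-|V|$ forces the game to be infinite.

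Case 2 ($N < |E|$) is the substantive case. The key structural lemma is: \emph{on a connected graph, any infinite game fires every vertex infinitely often.} I would prove this by contradiction: let $S$ be the nonempty set of vertices that fire only finitely many times; if $S \neq V$, connectedness supplies an edge from some $u \in S$ to some $v \notin S$, so once the last firing within $S$ has occurred, $u$ keeps receiving chips from $v$'s infinitely many further firings, eventually accumulates $\deg(u)$ chips, and must fire, a contradiction. Granted the lemma and an infinite game, finiteness of the vertex set together with the uniform per-vertex bound of $N$ chips yields only finitely many configurations, so pigeonhole produces a legal cycle returning to some configuration $c^*$. The cycle's firing vector $\psi$ satisfies $L\psi=0$ (net chip change zero), which on a connected graph forces $\psi = k\mathbf{1}$ for some $k\geq 1$. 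The desired bound comes from counting: for each $v$, just before its $k$-th firing in the cycle, $v$ has $c^*(v) + b_v - (k-1)\deg(v) \geq \deg(v)$ chips, where $b_v = \sum_{u \sim v} a_{u,v}$ and $a_{u,v}$ denotes the number of firings of $u$ preceding $v$'s $k$-th firing; hence $c^*(v) \geq k\deg(v) - b_v$. Summing and regrouping by edges, each edge $\{u,v\}$ contributes $a_{u,v}+a_{v,u}$ to $\sum_v b_v$, and whichever endpoint fires its $k$-th time first (say $u$) satisfies $a_{u,v}=k$ while $a_{v,u}\leq k-1$, so each edge contributes at most $2k-1$. Therefore $N = \sum_v c^*(v) \geq 2k|E|-(2k-1)|E| = |E|$, contradicting $N<|E|$.

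Case 3 ($|E|\leq N\leq 2|E|-|V|$) requires two witness configurations. The terminating witness places up to $\deg(v)-1$ chips at each vertex, absorbing all $N\leq\sum_v(\deg(v)-1)$ chips into an already stable configuration. The non-terminating witness fixes any vertex ordering $v_1,\ldots,v_{|V|}$, sets $c_0(v_i) = |\{j>i : v_j \sim v_i\}|$ (totaling $|E|$ chips by counting each edge at its smaller-indexed endpoint), and places the remaining $N-|E|\geq 0$ chips on $v_1$. Then $v_1,v_2,\ldots,v_{|V|}$ is a legal firing order because at step $i$ the chip count at $v_i$ equals $c_0(v_i)+|\{j<i : v_j\sim v_i\}| = \deg(v_i)$ (or more, for $i=1$), and since $L\mathbf{1}=0$ the sequence returns to $c_0$, producing an infinite game.

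The main obstacle is the counting step in Case 2, specifically the uniform bound $a_{u,v}+a_{v,u}\leq 2k-1$ on each edge's contribution. This depends on the simple but easy-to-miss observation that whichever endpoint fires its $k$-th time first has already completed all $k$ of its firings, while the other endpoint can have fired at most $k-1$ times by that moment. Once this bound is secured, the remaining steps reduce to arithmetic.
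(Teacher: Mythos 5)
The paper does not actually prove this theorem: it is quoted verbatim from Bj\"orner, Lov\'asz, and Shor \cite{BLS91} and used as a black box, so there is no in-paper proof to compare against. Your reconstruction is essentially the original BLS argument, and it is correct in all three cases: the chip-conservation bound for the first bullet; the recurrent-cycle counting for the second (the identification $\psi=k\mathbf{1}$ from $L\psi=0$ on a connected graph, and the per-edge bound $a_{u,v}+a_{v,u}\le 2k-1$ obtained by asking which endpoint completes its $k$-th firing first, are exactly the right ingredients and the arithmetic $N\ge 2k|E|-(2k-1)|E|=|E|$ closes the case); and the two explicit witnesses for the third bullet (a configuration that is stable from the start, and the ``fire in a fixed linear order'' configuration with $c_0(v_i)$ equal to the number of higher-indexed neighbors, which cycles back to itself).

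One step is stated incorrectly, though it is trivially repaired. In your lemma that an infinite game on a connected graph fires every vertex infinitely often, you conclude that $u$ ``eventually accumulates $\deg(u)$ chips, and must fire, a contradiction.'' A vertex holding at least $\deg(u)$ chips is \emph{permitted} to fire, but nothing in the rules forces a given infinite play ever to select it, so no contradiction has yet been reached. The correct contradiction is already implicit in your setup: after the last firing inside $S$, the vertex $u$ never loses a chip but gains one from each of $v$'s infinitely many subsequent firings, so its chip count grows without bound, whereas no vertex can ever hold more than the conserved total $N$. With that one sentence replaced, the argument is complete.
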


In our case, we have an infinite $k$-ary tree with a finite number of chips; therefore, we have $N < |E|$, implying that the configuration leads to a finite game.

Klivans \cite{Kli19} studied many properties of unlabeled chip-firing. In particular, we use the following property from this work called the \textit{Global Confluence}.

\begin{theorem}[Theorem 2.2.2 in \cite{Kli19}]
If we can reach a stable configuration with a finite number of fires, then the stable configuration is unique. 
\end{theorem}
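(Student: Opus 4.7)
The plan is to establish global confluence by first proving a local commutation (``diamond'') property and then bootstrapping it into an abelian property for entire firing sequences. All the hard work is in the intermediate abelian lemma; uniqueness of the stable configuration follows immediately.

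First I would prove the \textbf{diamond property}: if in a configuration $C$ two distinct vertices $u$ and $v$ can each fire, and the sequence ``fire $u$, then fire $v$'' is legal, then the sequence ``fire $v$, then fire $u$'' is also legal and yields the exact same resulting configuration. This is a direct bookkeeping check. Firing $u$ only decreases $u$'s chip count and increases the counts at its neighbors, so in particular it cannot reduce $v$'s chip count below $\deg(v)$; thus $v$ is still legal after firing $u$, and symmetrically for $u$ after firing $v$. The net change in chips at every vertex depends only on the multiset $\{u,v\}$ of vertices fired, not on the order.

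Next I would prove the \textbf{abelian lemma}: if $\sigma = v_1 v_2 \cdots v_m$ is a legal firing sequence starting from $C$, and $w$ is any vertex that can fire in $C$, then either $w$ appears somewhere in $\sigma$, or $w$ can still fire in the configuration reached after performing $\sigma$. The proof is by induction on $m$. If $w = v_1$, we are done. Otherwise, the diamond property lets us swap $w$ past $v_1$: perform $w$ first (still legal, since firing $v_1$ only helps $w$'s neighbors and does not affect $w$'s ability), then reduce to the length-$(m-1)$ statement starting from the configuration reached by firing $v_1$, where $w$'s legality is preserved by the same reasoning.

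Finally I would derive uniqueness. Suppose $\sigma_1$ is a legal firing sequence reaching a stable configuration $S_1$ in finitely many steps, and $\sigma_2$ is any other legal firing sequence reaching a stable configuration $S_2$. For each vertex $w$, let $f_i(w)$ denote the number of times $w$ is fired in $\sigma_i$. Using the abelian lemma repeatedly, I would show $f_2(w) \leq f_1(w)$ for every $w$: otherwise, after performing $\sigma_1$ entirely, some vertex would still be legal (contradicting stability of $S_1$). By symmetry $f_1(w) = f_2(w)$. Since the net effect of a firing sequence on the chip configuration is determined entirely by the firing-count vector (each firing of $v$ subtracts $\deg(v)$ from $v$ and adds one to each neighbor), we conclude $S_1 = S_2$.

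I expect the main obstacle to be the induction in the abelian lemma, specifically verifying that legality of $w$ truly survives commutations with earlier firings; this is precisely what the diamond property supplies, which is why that local step is the crucial combinatorial input.
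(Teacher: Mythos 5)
The paper does not prove this statement at all: it is imported verbatim as Theorem 2.2.2 of Klivans's book \cite{Kli19} and used as a black box (the ``Global Confluence'' property), so there is no in-paper proof to compare against. Your argument is the standard textbook proof of the abelian property --- local diamond lemma, then an inductive ``permanence of fireability'' lemma, then equality of firing vectors --- and it is essentially sound. The one place where your sketch is thinner than it should be is the final step: to get $f_2(w) \le f_1(w)$ for \emph{every} $w$ (not just for vertices fireable in the initial configuration), the abelian lemma alone does not suffice; you need the induction on the length of $\sigma_2$ in which, at each step, the vertex $w$ about to be fired by $\sigma_2$ is shown to occur in $\sigma_1$ (by your abelian lemma plus stability of $S_1$) and is then \emph{commuted to the front} of $\sigma_1$ using the diamond property, after which both sequences are shortened and the induction hypothesis applies from the new common configuration. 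You have all the ingredients for this (the diamond property is exactly the commutation tool needed), but the phrase ``using the abelian lemma repeatedly'' hides this rearrangement step, which is where the actual work lies. Also, since the graph here is infinite, one should note that only finitely many vertices ever hold chips or fire, so all firing vectors are finitely supported and the componentwise comparison is well posed; this is immediate but worth a sentence.
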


This theorem implies that the order of fires does not matter. In our case, the game is finite, implying that we always reach a unique stable configuration.

\subsection{Base $k$}

Our formulae related to $k$-ary trees can be better expressed in base $k$. Hence, we use the notation $(m)_k$ or $m_k$ to indicate that a string of digits $m$ is written in base $k$. For example, $111_2 = (111)_2 = 7$. Correspondingly, we will write $[n]_k$ to denote the string of digits representing a decimal integer $n$ in base $k$. Thus, $[n]_k = m$ is equivalent to $n = (m)_k$.

We denote by $\nu_k(n)$ the largest power of $k$ that divides $n$. In other words, $\nu_k(n)$ is the number of trailing zeros in $[n]_k$. When $k$ is prime, the value $\nu_k(n)$ is called the $k$-adic valuation of $n$. We will extend the terminology to composite numbers as well. In other words,
\[k^{\nu_k(n)} | n \quad \textrm{ and } \quad k^{\nu_k(n)+1} \nmid n.\]

\section{Stable configurations}
\label{sec:stableconfs}

In Proposition 3.3 from \cite{MN23}, the stable configurations are described for the case $k=2$. We describe the stable configurations for any $k \ge 2$.

\begin{proposition}
If we start with $N$ chips at the root, where $\frac{k^{n} - 1}{k - 1} \leq N \leq \frac{k^{n + 1} - k}{k - 1}$, then the vertices containing chips in the stable configuration form a perfect $k$-ary tree with height $n-1$. Furthermore, every vertex on the same layer has the same number of chips.
\end{proposition}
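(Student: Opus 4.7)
My plan is three-fold: reduce to a one-dimensional problem by exploiting the tree's symmetry, induct on $N$ to determine the support of the stable configuration, and finally combine these to identify the height. For Stage~1, note that for every internal vertex $v$, permuting the $k$ subtrees rooted at $v$'s children extends to an automorphism of the tree; the group $G$ generated by all such permutations fixes the initial configuration (all $N$ chips at the root). By Global Confluence the stable configuration is unique, hence $G$-invariant, and since $G$ acts transitively on each layer, all vertices on layer $i$ carry the same number of chips $c_i \in \{0,1,\ldots,k\}$.

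For Stage~2, I would induct on $N$ to show that the stable configuration has contiguous support $\{1,\ldots,n\}$ with $c_i \in \{1,\ldots,k\}$ for $i \le n$ and $c_i = 0$ for $i > n$, where $n = n(N)$ is the integer satisfying $\frac{k^n-1}{k-1} \le N \le \frac{k^{n+1}-k}{k-1}$. The base case $N = 1$ (with $n = 1$ and $c_1 = 1$) is trivial. For the inductive step, the Abelian property tells us the $(N{+}1)$-stable configuration is obtained by placing one additional chip at the root of the $N$-stable configuration and firing until stable. I would verify by an explicit firing cascade that this stabilization does one of three things: (a) if $c_1 < k$, no firing occurs and $c_1 \mapsto c_1 + 1$; (b) if $c_1 = c_2 = \cdots = c_{j-1} = k$ and $c_j < k$ for some $j \le n$, a cascade resets $c_1,\ldots,c_{j-1}$ to $1$, raises $c_j$ by $1$, and leaves deeper layers untouched; (c) if $c_1 = c_2 = \cdots = c_n = k$ (which forces $N = \frac{k^{n+1}-k}{k-1}$), the cascade resets those layers to $1$ and creates a new bottom layer with $c_{n+1} = 1$. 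In every case the support stays contiguous, $c_i \in \{1,\ldots,k\}$ on the support, and $n(N+1)$ matches the claimed formula.

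Combining Stages~1 and~2 gives the statement directly: the stable configuration is uniform per layer and supported on exactly layers $1,\ldots,n$, so the chip-bearing vertices form a perfect $k$-ary subtree of height $n-1$. The main obstacle will be the firing cascade in Stage~2(b,c). When $c_1 = \cdots = c_{j-1} = k$ and we add a chip at the root, I would need to schedule fires explicitly --- root, then layer~2, then root again, then layer~3, then layer~2, then root, and so on, winding down to layer $j-1$ --- verifying at each step that the firing vertex has at least $k+1$ chips and that the cumulative effect matches the claim. The intermediate states violate layer-wise uniformity in controlled bursts (e.g., right after layer~2 fires, each layer-2 vertex holds $0$ chips while the root climbs back to $k+1$), so the bookkeeping is delicate but ultimately routine.
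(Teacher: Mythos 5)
Your proposal is correct, but it proves the proposition by a genuinely different route than the paper. The paper inducts on the height $n$ and exploits the tree's self-similarity: after the root exhausts its initial firings, each child of the root holds between $\frac{k^{i}-1}{k-1}$ and $\frac{k^{i+1}-k}{k-1}$ chips, and the ``echo'' mechanism (the $k$ children fire, the root collects $k$ chips and immediately fires one back to each child) makes every subtree rooted at layer $2$ behave as an independent copy of the original problem with a self-loop at its root, so the inductive hypothesis applies directly. You instead split the claim in two: layer-uniformity follows from invariance of the unique stable configuration under the root-fixing automorphism group (cleaner and more conceptual than the paper's parallel-firing bookkeeping for this part), and the support being exactly layers $1$ through $n$ follows from induction on $N$ via single-chip addition and a carrying cascade. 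Your cascade schedule (root, layer $2$, root, layer $3$, layer $2$, root, \ldots) is the correct legal firing order, and I verified it produces exactly the bijective base-$k$ increment you describe; a pleasant bonus of your route is that it yields the paper's Proposition~\ref{prop:a2c} (the digit formula for $c_i$) for free, since the cascade is literally incrementation in base $k$ with digits in $\{1,\ldots,k\}$. Two points you should make explicit when writing this up: first, the step ``the $(N+1)$-stable configuration is obtained by adding one chip to the $N$-stable configuration and restabilizing'' needs a one-line justification from Global Confluence (any firing sequence that stabilizes $N$ chips at the root remains legal in the presence of an extra chip, so one may perform it first and then continue); second, in case (c) you should note that $N=\frac{k^{n+1}-k}{k-1}$ gives $N+1=\frac{k^{n+2}-1}{k-1}\cdot\frac{k-1}{k^{n+2}-1}(N+1)$ --- more simply, $N+1=\frac{k^{n+1}-1}{k-1}$, which is exactly the threshold at which $n$ increments, so the index bookkeeping closes.
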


\begin{proof}
We prove this proposition by induction on $n$. When $n = 1$, the number of chips at the root ranges from $1$ to $k$, which is already a stable configuration, so this is a perfect $k$-ary tree with a height of 0. As only the root has chips in a stable configuration, every vertex on the same layer has the same number of chips.

Suppose the proposition holds for $n = i$; we will prove that it holds for $n = i + 1$. Suppose we start with $N$ chips at the root, where $\frac{k^{i + 1} - 1}{k - 1} \le N \le \frac{k^{i + 2} - k}{k - 1}$. We deploy the following firing strategy. 
\begin{enumerate}
\item Fire the root repeatedly until it cannot fire anymore.
\item Fire the root's children and their subtrees in parallel.
\item Whenever the $k$ children of the root fire, fire the root.
\item Repeat steps 2 and 3 until the stable configuration is reached.
\end{enumerate}

As this process is parallel, the root's children contain the same number of chips at any step. Every time the root fires, it sends 1 chip to itself. Therefore, the root always possesses at least 1 chip. After step 1, each child of the root contains between $\frac{k^{i} - 1}{k - 1}$ and $\frac{k^{i + 1} - k}{k - 1}$ chips. In step 3, when the $k$ children of the root fire, they each send 1 chip to the root. After that, the root gets $k$ chips, allowing it to fire, causing the root to return 1 chip to each child immediately. After step 1, we can interpret each subtree rooted at layer 2 as an independent tree with a self-loop at its root.

Therefore, the firing process of each subtree rooted at layer 2 is isomorphic to chip-firing on a rooted tree where the number of chips initially placed at the root is between $\frac{k^{i} - 1}{k - 1}$ and $\frac{k^{i + 1} - k}{k - 1}$, inclusive. By the induction hypothesis, once in the stable configuration, each subtree has the same number of chips on every vertex of the same layer. Moreover, vertices with chips form a perfect $k$-ary tree with height $n = i-1$. As the firing processes in all $k$ root's subtrees are the same, the stable configurations of the subtrees are the same. Thus, after the process stabilizes, the vertices containing chips in the whole tree form a perfect $k$-ary tree with height $i$, and every vertex on the same layer has the same number of chips. Since we are considering unlabeled chip-firing, the global confluence property applies, so the stable configuration is always as described above, finishing the proof.
\end{proof}

\begin{proposition}
\label{prop:a2c}
If we start with $N$ chips at the root, where $\frac{k^n-1}{k-1} \le N \le \frac{k^{n+1}-k}{k-1}$, then for $0\le i \le n-1$, the resulting stable configuration has $a_i+1$ chips on each vertex on layer $i+1$, where $a_{n-1}\dots a_2a_1a_0$ is the base $k$ expansion of $N - \frac{k^{n}-1}{k-1}$ with possible leading zeros.
\end{proposition}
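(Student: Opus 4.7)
The plan is to leverage the previous proposition to reduce everything to a counting/uniqueness argument about base-$k$ expansions. By the preceding proposition, the stable configuration is supported on a perfect $k$-ary tree of height $n-1$, and every vertex on the same layer carries the same number of chips. So let $c_i$ denote the common chip count of each vertex on layer $i+1$, for $0 \le i \le n-1$. I want to show $c_i = a_i + 1$.

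First I would record the two obvious constraints on the $c_i$. Stability forces $c_i \le k$, while the fact that every vertex of the perfect tree of height $n-1$ \emph{contains} chips (this is part of the previous proposition's assertion) forces $c_i \ge 1$. Hence $0 \le c_i - 1 \le k-1$, i.e.\ each $c_i - 1$ is a legal base-$k$ digit.

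Next I would use conservation of chips. Since chip-firing preserves the total number of chips and layer $i+1$ contains $k^i$ vertices,
\begin{equation*}
N \;=\; \sum_{i=0}^{n-1} c_i\, k^i.
\end{equation*}
Setting $a_i := c_i - 1$ and using the identity $\frac{k^n-1}{k-1} = \sum_{i=0}^{n-1} k^i$, this rearranges to
\begin{equation*}
\sum_{i=0}^{n-1} a_i\, k^i \;=\; N - \frac{k^n-1}{k-1}.
\end{equation*}
The hypothesis $\frac{k^n-1}{k-1} \le N \le \frac{k^{n+1}-k}{k-1}$ guarantees that the right-hand side lies in $[0, k^n-1]$, so it admits a (unique) base-$k$ expansion using at most $n$ digits. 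Combined with the digit bounds $0 \le a_i \le k-1$ established above, the uniqueness of base-$k$ representation forces $a_{n-1}\dots a_1 a_0$ to be exactly that expansion, which is the desired conclusion.

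There is no real obstacle here: the previous proposition did the structural heavy lifting (identifying the support of the stable configuration and the per-layer uniformity), so all that remains is a digit-extraction argument. The only point requiring a sentence of care is justifying $c_i \ge 1$ from the phrasing "the vertices containing chips form a perfect $k$-ary tree of height $n-1$," which I would spell out explicitly to make the digit range $0 \le a_i \le k-1$ unambiguous.
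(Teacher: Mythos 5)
Your proof is correct and follows essentially the same route as the paper's: both rely on the preceding proposition for the support and per-layer uniformity, then combine conservation of chips with uniqueness of the digit representation (the paper phrases it as uniqueness of representing $N$ with digits in $\{1,\dots,k\}$, while you shift by $1$ and invoke standard base-$k$ uniqueness, which is the same argument). No gaps.
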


\begin{proof}
We know that all vertices with chips form a perfect $k$-ary tree, where each vertex with chips has from 1 to $k$ chips, inclusive, and each vertex in the same layer has the same number of chips. Suppose each vertex on layer $i+1$ has $j_{i}$ chips. As there are $k^{i}$ vertices on layer $i+1$, the total number of chips is $\sum_{i=0}^{n-1} j_{i} k^{i}$, which equals $N$. Thus, the distribution of chips into layers in the final configuration implies representing $N$ in base $k$ with digits between $1$ and $k$. Since this representation is unique, to prove that the above formula describes the correct distribution of chips, we need to demonstrate that the total number of chips equals $N$. When we place $a_i+1$ chips on each vertex, we are placing a total of
\begin{align*}
 \sum_{i=0}^{n-1} k^{i}(a_i+1) &= \sum_{i=0}^{n-1} k^{i}a_i + \sum_{i=0}^{n-1} k^{i}\\
 &= N - \frac{k^n-1}{k-1} + \frac{k^n-1}{k-1}\\
 &= N
\end{align*}
chips, as desired.
\end{proof}

We see that numbers of the form $N = \frac{k^{n} - 1}{k - 1}$ play a special role, as the stable configuration reaches layer $n$ for the first time when we start with the number of chips in this form placed at the root.

From now on, let $a_{n - 1} \ldots a_{2}a_{1}a_{0}$ be the base $k$ expansion of $N - \frac{k^{n}-1}{k-1}$ with possible leading zeros. When we need to emphasize the dependency of these digits $a_i$ on the base $K$ and the number $N$, we will use the notation $a_i(N,k)$ instead of $a_i$. Similar to \cite{MN23, IKL24}, for the stable configuration starting with $N$ chips at the root in a $k$-ary tree, let us denote the number of chips at each vertex on layer $i+1$ as $c_{i}(N,k)$ for $0 \leq i \leq n-1$. Proposition~\ref{prop:a2c} states that $c_{i}(N,k) = a_{i}(N,k) + 1$. When the values of $N$ and $k$ are fixed, we will use the notations $a_i$ and $c_i$ to simplify the calculations.

The number $n$ also plays a special role in our paper. It was defined through inequalities
\[\frac{k^n-1}{k-1} \le N \le \frac{k^{n+1}-k}{k-1}.\]
After multiplying every term by $k-1$ and adding 1, we get
\[k^n \le N(k-1)+1 \le k^{n+1}-k+1.\]
Taking the log base $k$ of both sides gives us
\[n \le \log_k(N(k-1)+1) \le \log_k(k^{n+1}-k+1) < n+1.\]
Thus, we can define $n$ as 
\[n = \lfloor \log_{k}(N(k-1) + 1) \rfloor.\]

\section{The number of times each vertex fires}
\label{sec:vertexfires}

In \cite{MN23}, Musiker and Nguyen counted the number of times each vertex fires for a binary tree when starting with $N = 2^\ell - 1$ unlabeled chips at the root. In \cite{IKL24}, this was extended to any number of chips at the root; however, it was limited to binary trees. Here, we generalize their results for a $k$-ary tree.

\subsection{The number of fires for a vertex on layer $i+1$}

Let $f_i(N,k)$ denote the total number of fires of each vertex on layer $i+1$. When $N$ and $k$ are fixed, we will sometimes use the notation $f_i$ to simplify the calculations. We first look at the difference in the number of fires between layers $i-1$ and $i$.

\begin{lemma}
\label{lem:diffFire}
The difference in the number of fires between vertices on layers $i+1$ and $i+2$ equals the number of chips in the stable configuration belonging to the descendants of one particular vertex on layer $i + 1$ divided by $k$. In other
words,
\[f_i(N,k) - f_{i+1}(N,k) = \sum\limits_{j=i+1}^{n-1}k^{j-i-1}c_j(N,k),\]
where $n = \lfloor \log_{k}(N(k-1) + 1)\rfloor$.
\end{lemma}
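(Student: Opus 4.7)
The plan is to prove the identity via a conservation-of-chips argument on a single edge of the tree. Pick any vertex $v$ on layer $i+1$ and fix one of its $k$ children $w$ on layer $i+2$. I will count, in two ways, the net number of chips that move from $v$ into $w$ across the edge $vw$ during the entire chip-firing process.

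First, I compute this net flow in terms of fires. Every time $v$ fires, it sends exactly one chip to each neighbor, so in particular it sends one chip to $w$; over the whole game, $v$ contributes $f_i(N,k)$ chips to $w$ along this edge. Symmetrically, each fire of $w$ sends one chip back to $v$, contributing $f_{i+1}(N,k)$ chips to $v$ in the opposite direction. No chips cross the edge except by way of these fires. Thus the net number of chips moved from $v$ to $w$ along edge $vw$ is exactly $f_i(N,k) - f_{i+1}(N,k)$.

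Next, I compute the same net flow by tracking where the chips end up. The only way for chips to enter or leave the subtree $T_w$ rooted at $w$ is through edge $vw$, since $T_w$ is connected to the rest of the tree only at $w$. Initially $T_w$ contains no chips (all $N$ chips begin at the root). Hence the net flow from $v$ to $w$ equals the total number of chips contained in $T_w$ in the stable configuration. By Proposition~\ref{prop:a2c}, the stable configuration puts $c_j(N,k)$ chips on each vertex of layer $j+1$. The subtree $T_w$ has its root on layer $i+2$ and, within layer $j+1$ for $j \ge i+1$, contains exactly $k^{j-i-1}$ vertices. Summing over the layers that carry chips yields $\sum_{j=i+1}^{n-1} k^{j-i-1} c_j(N,k)$.

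Equating the two expressions for the net flow gives the claimed identity. There is no real obstacle here: the only thing to check carefully is the bookkeeping of layer indices and the fact that the subtree $T_w$ is indeed isolated from the rest of the graph except through the single edge $vw$ (and in particular that $w$ is not the root, so the self-loop plays no role in this count). The upper summation limit $n-1$ is justified because layers beyond $n$ contain no chips in the stable configuration, as established in Section~\ref{sec:stableconfs}.
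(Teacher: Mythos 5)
Your proof is correct and rests on the same chip-conservation idea as the paper's: the paper equates $k f_i$ with the chips left at $v$'s descendants plus the $k f_{i+1}$ chips returned by its children, while you track the net flow across a single edge $vw$ and equate it with the chips stabilizing in the one subtree $T_w$; these are the same accounting, differing only in whether one sums over all $k$ children or isolates one. Both yield the identity, and your index bookkeeping for the vertices of $T_w$ on each layer is right.
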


\begin{proof}
The number of times a vertex fires equals the total number of chips sent to all children of the vertex divided by $k$. For a given vertex, the number of chips sent to its children is equal to the number of chips placed at the vertex's descendants plus the number of chips sent back from its children. The number of chips sent back to the vertex from its children is $k$ times the number of fires of each child. Thus, the difference between the number of fires of a vertex $v$ and its child is the number of chips belonging to the descendants of $v$ in the stable configuration divided by $k$.

Let vertex $v$ be on layer $i+1$, and let $j > i$. The number of vertices on layer $j+1$ that are the descendants of $v$ is $k^{j-i}$. Each descendant contains $c_j(N,k)$ chips by Proposition~\ref{prop:a2c}. Adding them up across all of the vertices, the number of chips placed at the descendants of $v$ is $\sum\limits_{j=i+1}^{n-1}k^{j-i}c_j(N,k)$, and the result follows after dividing this by $k$.
\end{proof}

\begin{example}
As the chips on the last layer, $n$, never fire, we have $f_{n-1} = 0$. From Lemma~\ref{lem:diffFire}, we get $f_{n-2} = c_{n-1}$.
\end{example}

Now, we can calculate the number of fires for each vertex on layer $i+1$.

\begin{theorem}
\label{thm:fires}
Given the total number of chips $N$ and the index $n = \lfloor\log_k(N(k-1)+1) \rfloor$, the number of fires for each vertex on layer $i+1$ is:
\[f_i(N,k) = \sum_{j = 1}^{n-i - 1} \left(\frac{k^{j} - 1}{k - 1} \right) c_{i+ j}(N,k).\]
\end{theorem}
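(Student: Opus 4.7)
The plan is to telescope using Lemma~\ref{lem:diffFire}, starting from the base case $f_{n-1}(N,k) = 0$ (leaves on the last nonempty layer never accumulate enough chips to fire, as noted in the Example just preceding the theorem). Writing
\[
f_i(N,k) = \sum_{m=i}^{n-2}\bigl(f_m(N,k) - f_{m+1}(N,k)\bigr)
\]
and substituting the expression from Lemma~\ref{lem:diffFire} gives a double sum
\[
f_i(N,k) = \sum_{m=i}^{n-2}\sum_{j=m+1}^{n-1} k^{\,j-m-1}\, c_j(N,k).
\]

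Next I would swap the order of summation. For a fixed $j$ in the range $i+1 \le j \le n-1$, the index $m$ runs over $i \le m \le j-1$. The resulting inner sum is a finite geometric series: as $m$ ranges from $i$ to $j-1$, the exponent $j - m - 1$ ranges from $j - i - 1$ down to $0$, so
\[
\sum_{m=i}^{j-1} k^{\,j-m-1} \;=\; \sum_{\ell=0}^{j-i-1} k^{\ell} \;=\; \frac{k^{\,j-i}-1}{k-1}.
\]
Substituting this back yields
\[
f_i(N,k) \;=\; \sum_{j=i+1}^{n-1} \frac{k^{\,j-i}-1}{k-1}\, c_j(N,k),
\]
and the change of variable $j = i + p$ produces exactly the claimed formula.

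The only potentially delicate step is bookkeeping on the summation bounds (and being sure the swap is valid and that the base case $f_{n-1}(N,k) = 0$ is genuinely available), but once set up this is a routine manipulation rather than a real obstacle. I would also pause to verify the formula against the example $f_{n-2}(N,k) = c_{n-1}(N,k)$: plugging $i = n-2$ into the final expression leaves only the $j = 1$ term with coefficient $\frac{k-1}{k-1} = 1$, recovering the expected value and serving as a quick sanity check.
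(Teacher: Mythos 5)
Your proposal is correct and is essentially the paper's argument in a different wrapper: the paper proves the formula by backward induction on $i$ using Lemma~\ref{lem:diffFire} and the base case $f_{n-1}=0$, which is exactly your telescoping sum unrolled, and the index bookkeeping (your swap of summation order and geometric series) matches the reordering done in the paper's induction step. No gaps; the sanity check against $f_{n-2}=c_{n-1}$ is a nice touch but not a substantive difference.
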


\begin{proof}
We prove this formula using backward induction, starting with $i = n-1$. As the last layer never fires, we have $f_{n - 1} = 0$. Our formula gives us the same: $f_{n - 1} = \sum_{j = 1}^{0} (\frac{k^{j} - 1}{k - 1} ) c_{i + j} = 0$. Thus, the base of the induction is established.

Suppose the formula is true for a given $i+1$. We want to calculate the number of fires for $i$. In other words, we want to find $f_{i}$ assuming that $f_{i+1} = \sum_{j = 1}^{n-i - 2} ( \frac{k^{j} - 1}{k - 1} ) c_{i + j +1}$.

From Lemma~\ref{lem:diffFire}, we have that
\[f_{i} - f_{i+1} = \sum_{j = i+1}^{n - 1} k^{j - i - 1} c_{j}.\]
Thus, after reordering terms and keeping track of indices, we get
\begin{align*}
    f_{i} & = f_{i+1} + \sum_{j = i+1}^{n - 1} k^{j - i - 1} c_{j} = \sum_{j = 1}^{n-i - 2} \left(\frac{k^{j} - 1}{k - 1} \right) c_{i + j+1} + \sum_{j = i+1}^{n - 1} k^{j - i - 1} c_{j}\\
    & = \sum_{j = 1}^{n-i - 2} \left(\frac{k^{j} - 1}{k - 1} \right) c_{i + j + 1} + \sum_{j = 0}^{n-i-2} k^{j}c_{i + j+1} = c_{i+1} + \sum_{j = 1}^{n-i - 2} \left( \frac{k^{j + 1} - 1}{k - 1} \right) c_{i+j+1}\\
    & = \sum_{j = 1}^{n-i - 1} \left(\frac{k^{j} - 1}{k - 1} \right) c_{i+ j}(N,k),
\end{align*}
concluding the proof.
\end{proof}

The following corollary allows us to express the number of fires of each vertex on layer $i+1$ through the number of fires of the root.

\begin{corollary}
We have
\[f_i(N, k) = f_0\left(\left\lfloor \frac{N - \frac{k^n - 1}{k - 1}}{k^i} \right\rfloor + \frac{k^{n-i}-1}{k-1}, k\right),\]
where $n=\lfloor\log_k(N(k-1)+1)\rfloor$.
\end{corollary}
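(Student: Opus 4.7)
The plan is to apply Theorem~\ref{thm:fires} to both sides of the claimed identity and show that the two resulting sums agree term by term. Observe that the formula $f_i(N,k) = \sum_{j=1}^{n-i-1} \tfrac{k^j-1}{k-1}\, c_{i+j}(N,k)$ from Theorem~\ref{thm:fires} depends on $N$ only through the upper digits $a_{i+1},\ldots,a_{n-1}$ of the base-$k$ expansion of $N-\tfrac{k^n-1}{k-1}$. So the corollary amounts to showing that the argument $M := \lfloor (N-\tfrac{k^n-1}{k-1})/k^i\rfloor + \tfrac{k^{n-i}-1}{k-1}$, when plugged into $f_0(\,\cdot\,,k)$, produces a fresh tree whose layer digits reproduce exactly those upper digits.

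First I would set $n' := \lfloor \log_k(M(k-1)+1)\rfloor$, since that is the value of $n$ used in Theorem~\ref{thm:fires} for $f_0(M,k)$, and check that $n' = n-i$. Computing $M(k-1)+1 = \lfloor (N-\tfrac{k^n-1}{k-1})/k^i\rfloor\,(k-1) + k^{n-i}$ and using the defining bound $0 \le N-\tfrac{k^n-1}{k-1} \le k^n-1$, one gets $0 \le \lfloor (N-\tfrac{k^n-1}{k-1})/k^i\rfloor \le k^{n-i}-1$, hence $k^{n-i} \le M(k-1)+1 < k^{n-i+1}$, which pins down $n' = n-i$.

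Next I would match the digit sequences. Writing $N - \tfrac{k^n-1}{k-1} = a_{n-1}k^{n-1} + \cdots + a_1 k + a_0$, integer division by $k^i$ gives $M - \tfrac{k^{n-i}-1}{k-1} = a_{n-1}k^{n-i-1} + \cdots + a_{i+1} k + a_i$, so the base-$k$ digits $a'_j$ of $M - \tfrac{k^{n-i}-1}{k-1}$ satisfy $a'_j = a_{i+j}$ for $0 \le j \le n-i-1$. By Proposition~\ref{prop:a2c}, this yields $c_j(M,k) = a_{i+j}+1 = c_{i+j}(N,k)$ in the same range. Finally, applying Theorem~\ref{thm:fires} with $i=0$ to $M$ and substituting:
\[
f_0(M,k) = \sum_{j=1}^{n'-1} \frac{k^j - 1}{k-1}\, c_j(M,k) = \sum_{j=1}^{n-i-1} \frac{k^j - 1}{k-1}\, c_{i+j}(N,k) = f_i(N,k).
\]

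The only genuinely technical step is the verification that $n' = n-i$; the digit-matching is then just reading off the effect of integer division by $k^i$. I would expect the write-up to be essentially a careful bookkeeping exercise rather than anything conceptually subtle, since the identity simply reflects the intuitive fact that a subtree rooted at layer $i+1$ behaves like a fresh $k$-ary tree whose initial configuration is inherited from the high-order digits of $N - \tfrac{k^n-1}{k-1}$.
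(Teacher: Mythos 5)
Your proposal is correct and follows essentially the same route as the paper: both reduce the identity to the observation that Theorem~\ref{thm:fires} depends only on the digits $c_{i+1},\dots,c_{n-1}$, and both identify the argument of $f_0$ as the number whose base-$k$ digits (after subtracting $\frac{k^{n-i}-1}{k-1}$) are the high-order digits of $N-\frac{k^n-1}{k-1}$. You merely run the argument in the opposite direction (starting from the explicit expression and verifying its digits, rather than defining $N'$ by its digits and deriving the expression), and you supply the check that the new index equals $n-i$, which the paper leaves implicit.
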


\begin{proof}
The formula for the number of fires of each vertex on layer $i+1$ is
\[f_i(N,k) = \sum_{j=1}^{n-i-1} \left(\frac{k^j - 1}{k-1}\right) c_{i+j}(N, k).\]
This matches up with the total number of fires for the root of a tree if we start with $N'$ chips, where $[N']_k$ is the first $n-i$ digits of $[N]_k$. In other words, 
\[c_j(N',k) = c_{i+j}(N,k).\]
In this case,
\[f_0(N',k) = \sum_{j=1}^{n-i-1} \left(\frac{k^j - 1}{k - 1} \right) c_{j}(N',k) = f_i(N,k).\]

From Proposition~\ref{prop:a2c}, we know that $a_j(N',k) = a_{i+j}(N,k)$. This implies that
\[N' - \frac{k^{n-i}-1}{k-1} = \left\lfloor \frac{N - \frac{k^n - 1}{k - 1}}{k^i} \right\rfloor,\]
and the corollary follows.
\end{proof}

The corollary above implies that we can focus only on the number of root fires to understand the properties of the number of vertex fires on other layers.

\subsection{The number of root fires}
\label{sec:rootfires}

As a particular case of Theorem~\ref{thm:fires}, we get the number of root fires.

\begin{corollary}
\label{cor:rootfires}
Given the total number of chips $N$ and the index $n = \lfloor \log_{k}(N(k-1) + 1) \rfloor$, the number of times the root fires is
\[f_0(N,k) = \frac{1}{k-1}\sum_{j=1}^{n-1} (k^j-1)c_j(N,k).\]
\end{corollary}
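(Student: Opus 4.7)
The plan is to obtain this corollary as an immediate specialization of Theorem~\ref{thm:fires}. That theorem already gives a closed form for the number of fires $f_i(N,k)$ of any vertex on layer $i+1$, so specializing to $i=0$ yields the number of root fires directly. There is no substantive new content here beyond a substitution and a rearrangement.

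Concretely, I would first invoke Theorem~\ref{thm:fires} with $i=0$ to write
\[
f_0(N,k) \;=\; \sum_{j=1}^{n-1}\left(\frac{k^j-1}{k-1}\right) c_{0+j}(N,k),
\]
where $n=\lfloor \log_k(N(k-1)+1)\rfloor$ is the same index defined in Theorem~\ref{thm:fires}. Then I would pull the constant factor $\tfrac{1}{k-1}$ (which is independent of the summation index $j$) out in front of the sum, obtaining
\[
f_0(N,k) \;=\; \frac{1}{k-1}\sum_{j=1}^{n-1}(k^j-1)\,c_j(N,k),
\]
which is exactly the claimed identity.

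There is essentially no obstacle: the only thing to check is that the upper limit of summation matches, namely $n-i-1 = n-1$ when $i=0$, and that the indices $c_{i+j}$ reduce to $c_j$ when $i=0$. Both are immediate. Thus the proof can be written as a one-line deduction from Theorem~\ref{thm:fires}.
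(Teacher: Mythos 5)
Your proposal is correct and matches the paper exactly: the paper states this corollary as an immediate specialization of Theorem~\ref{thm:fires} to $i=0$, with the factor $\tfrac{1}{k-1}$ pulled out of the sum, and gives no further argument. Nothing is missing.
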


\begin{example}
Suppose we have a ternary tree $(k=3)$, and the total number of chips is 9 $(N=9)$. This means that we have index $n = \lfloor \log_3(9\cdot 2+1) \rfloor =2$. The ternary representation of $N-\frac{k^2-1}{k-1}= 9-4 = 5$ is $12_3$, implying that $a_1 =1$ and $c_1=2$. This means that the total number of times the root fires is $\sum_{j=1}^{1} \left( \frac{3^j-1}{2} \cdot 2 \right)=2$ times.
\end{example}

As in \cite{IKL24}, we are looking for a recursive formula.

\begin{corollary}
\label{cor:f0recursion}
We have
\[f_0(N) = \left\lceil \frac{N}{k} \right\rceil - 1 + f_0\left(\left\lceil \frac{N}{k} \right\rceil - 1\right).\]
\end{corollary}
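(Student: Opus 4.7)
My plan is to verify the recursion directly from the closed form in Corollary~\ref{cor:rootfires} by showing that the base-$k$ expansion of $M := \lceil N/k \rceil - 1$ is exactly the expansion of $N$ with its lowest digit shifted off.

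First I would use Proposition~\ref{prop:a2c}, which gives $N = \sum_{i=0}^{n-1} c_i(N)\, k^i$ with each $c_i(N) \in \{1,\dots,k\}$, to compute the ceiling. Splitting into the cases $c_0(N) = k$ and $c_0(N) < k$ shows in both cases that $\lceil N/k \rceil = 1 + \sum_{i=1}^{n-1} c_i(N)\, k^{i-1}$, so
\[M \;=\; \sum_{i=0}^{n-2} c_{i+1}(N)\, k^i \;=\; \sum_{j=1}^{n-1} c_j(N)\, k^{j-1}.\]

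Next I would identify the parameters of $M$ that appear in Corollary~\ref{cor:rootfires} when applied to $M$ in place of $N$. Because each $c_{i+1}(N) \in \{1,\dots,k\}$, the bounds $\tfrac{k^{n-1}-1}{k-1} \le M \le \tfrac{k^n-k}{k-1}$ hold, so the index associated with $M$ equals $n-1$. Subtracting $\tfrac{k^{n-1}-1}{k-1}$ from $M$ yields $\sum_{i=0}^{n-2} a_{i+1}(N)\, k^i$, which is a legitimate base-$k$ expansion since each $a_{i+1}(N) \in \{0,\dots,k-1\}$. By uniqueness of such expansions, $a_i(M,k) = a_{i+1}(N,k)$ and hence $c_i(M,k) = c_{i+1}(N,k)$ for $0 \le i \le n-2$.

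Substituting into Corollary~\ref{cor:rootfires} yields $f_0(M) = \tfrac{1}{k-1}\sum_{j=1}^{n-2}(k^j-1)\,c_{j+1}(N)$; after re-indexing, this becomes $\tfrac{1}{k-1}\sum_{j=2}^{n-1}(k^{j-1}-1)\,c_j(N)$. Adding $M$ to this, isolating the $j = 1$ term of $M$, and combining the $j \ge 2$ terms via the telescoping identity $k^{j-1} + \tfrac{k^{j-1}-1}{k-1} = \tfrac{k^j-1}{k-1}$, the expression collapses to $\tfrac{1}{k-1}\sum_{j=1}^{n-1}(k^j-1)\,c_j(N)$, which is exactly $f_0(N)$ by Corollary~\ref{cor:rootfires}. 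The only real obstacle is index bookkeeping together with the boundary case $n = 1$ (where $N \le k$ forces every sum to be empty, so both sides vanish); once this and the ceiling edge case $c_0(N) = k$ are handled, the remaining algebra is entirely routine.
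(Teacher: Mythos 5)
Your proposal is correct, but it takes a genuinely different route from the paper. The paper's proof is a one-sentence combinatorial argument in terms of chip flow: the root fires $\lceil N/k\rceil-1$ times ``initially,'' each child's subtree then behaves like the whole tree seeded with $\lceil N/k\rceil-1$ chips, and every simultaneous firing of the $k$ children returns $k$ chips to the root and triggers exactly one additional root fire --- so the extra root fires equal the number of fires of one child, giving the recursion directly. You instead verify the recursion algebraically from the closed form of Corollary~\ref{cor:rootfires}: you show that $M=\lceil N/k\rceil-1$ has index $n-1$ and digits $c_j(M,k)=c_{j+1}(N,k)$ (the digit-shift observation, which is essentially the same fact the paper exploits in the corollary relating $f_i$ to $f_0$), and then collapse $M+f_0(M)$ into $f_0(N)$ via the identity $k^{j-1}+\tfrac{k^{j-1}-1}{k-1}=\tfrac{k^j-1}{k-1}$. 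All the steps check out, including the computation $\lceil N/k\rceil=1+\sum_{i\ge 1}c_i k^{i-1}$ (valid since $1\le c_0\le k$) and the boundary case $n=1$. The paper's argument is shorter and explains \emph{why} the recursion holds dynamically, but it leans implicitly on the firing strategy set up in the proof of the stable-configuration proposition; your argument is longer and purely formal, but it is self-contained once the closed form is available and doubles as an independent consistency check of Corollary~\ref{cor:rootfires}.
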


\begin{proof}
The number of times the root fires is equal to the number of times it fires initially plus the number of times its children fire.
\end{proof}

\subsection{Special case of $N = \frac {k^n-1}{k-1}$}

The case of $N = \frac{k^{n} - 1}{k - 1}$ is important throughout our paper, as the stable configuration reaches layer $n$ for the first time when we start with $\frac{k^{n} - 1}{k - 1}$ chips in this form placed at the root. Moreover, if we start with $\frac{k^n - 1}{k-1}$ chips at the root, then $c_i = 1$ for $0 \le i < n$. Thus, the number of fires for a vertex on level $i+1$ is
\[f_{i}(N, k) = \sum_{j=1}^{n-i-1} \left( \frac{k^j-1}{k-1} \right) = \frac{k^{n-i}-k(n-i)+(n-i)-1}{(k-1)^2}.\]
Plugging in $k=2$, we get
\[f_{i}(N, 2) = 2^{n-i} - 2(n-i) + (n-i) - 1 = 2^{n-i} - n + i - 1,\]
matching the answer from \cite{MN23}.

The total number of root fires when $N = \frac{k^{n} - 1}{k - 1}$ is
\[f_{0}(N, k) = \frac{1}{k - 1}\sum_{j = 1}^{n - 1}(k^{j} - 1) = \frac{\frac{k^{n} - 1}{k - 1} - n}{k - 1} = \frac{k^{n} - nk + (n - 1)}{(k - 1)^2}.\]
Plugging in $k=2$, we see that for $2^n-1$ chips on a binary tree, the root fires $2^n-n-1$ times, which is the same result as in \cite{MN23}. These numbers form sequence A000295 in the OEIS \cite{OEIS}. Starting from index 1, the first few terms are
\[0,\ 1,\ 4,\ 11,\ 26,\ 57,\ 120,\ 247,\ 502,\ 1013,\ \ldots.\]

When we plug in $k=3$, we get that there are $\frac{3^n-2n-1}{4}$ root fires, which corresponds to sequence A000340 in the OEIS with a shifted index. Similarly, when we plug in $k=4$ and $k=5$, we get $\frac{4^n-3n-1}{9}$ and $\frac{5^n-4n-1}{16}$ root fires, which correspond to sequences A014825 and A014827 in the OEIS \cite{OEIS}. These sequences appear later in our paper in Table~\ref{tab:a}.

\subsection{Differences}

Notice that for all $N \in \{ak+1,ak+2,\ldots,(a+1)k\}$, the same vertex of the tree fires the same number of times. Thus, it makes sense to consider the number of fires as a function of $\left\lceil \frac{N}{k} \right\rceil$. Therefore, we introduce a new set of functions:
\[g_i(m,k) = f_i(mk,k).\]

%We wrote a program to find $g(m,k)$ of $m, k \leq 1000$; the full data can be found at this spreadsheet \url{https://docs.google.com/spreadsheets/d/1y-vwPfH192JlxLnWHYgQh1dqYv_ZaaIaujwTwTTaaOo/edit?usp=sharing}.

Table~\ref{tab:g0} shows values of $g_0(m,k)$ for $1 \le m \le 10$ and $2 \le k \le 6$.

\begin{table}[ht!]
\begin{center}
\begin{tabular}{|r|r|r|r|r|r|r|r|r|r|r|}
\hline
$k$\textbackslash $m$ & 1 & 2 & 3 & 4 & 5 & 6 & 7 & 8 & 9 & 10 \\
\hline
2 & 0 & 1 & 2 & 4 & 5 & 7 & 8 & 11 & 12 & 14 \\
\hline
3 & 0 & 1 & 2 & 3 & 5 & 6 & 7 & 9 & 10 & 11 \\
\hline
4 & 0 & 1 & 2 & 3 & 4 & 6 & 7 & 8 & 9 & 11 \\
\hline
5 & 0 & 1 & 2 & 3 & 4 & 5 & 7 & 8 & 9 & 10 \\
\hline
6 & 0 & 1 & 2 & 3 & 4 & 5 & 6 & 8 & 9 & 10 \\
\hline
\end{tabular}
\end{center}
\caption{Values of $g_0(m,k)$ for $1 \le m \le 10$ and $2 \le k \le 6$.}
\label{tab:g0}
\end{table}

The values for $g_0(m,2)$ were calculated in \cite{IKL24}, and they form sequence A376116 in the OEIS \cite{OEIS}.

The sequence $g_0(m,3)$ has been submitted to the OEIS as sequence A378724 and starts as
\[0,\ 1,\ 2,\ 3,\ 5,\ 6,\ 7,\ 9,\ 10,\ 11,\ 13,\ 14,\ 15,\ 18.\]
%0, 1, 2, 3, 5, 6, 7, 9, 10, 11, 13, 14, 15, 18, 19, 20, 22, 23, 24, 26, 27, 28, 31, 32, 33, 35, 36, 37, 39, 40, 41, 44, 45, 46, 48, 49, 50, 52, 53, 54, 58, 59, 60, 62, 63, 64, 66, 67, 68, 71, 72, 73, 75, 76, 77, 79, 80, 81, 84, 85, 86, 88, 89, 90, 92, 93, 94, 98, 99, 100, 102, 103, 104, 106, 107

Consider the difference sequence $d_i(m, k) = g_i(m+1, k) - g_i(m, k)$. In this section, we are interested in the case $i=0$. The following proposition describes the recursion for $d_0(m, k)$.

\begin{proposition}
\label{prop:d0recursion}
The difference sequence $d_0(m, k) = g_0(m+1, k) - g_0(m, k)$ satisfies the recurrence
\[d_0(m, k) =\begin{cases}
	d_0(\frac{m-1}{k}, k) + 1, & \text{ if $m-1$ is a multiple of $k$},\\
	1, & \text{ otherwise}.
	\end{cases}\]
\end{proposition}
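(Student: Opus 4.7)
The plan is to derive the recurrence directly from Corollary~\ref{cor:f0recursion} after translating it from $f_0$ into the $g_0$ notation. First I would note that since $f_0(N,k)$ depends on $N$ only through $\lceil N/k\rceil$ (as remarked just before the definition of $g_i$), we can rewrite $f_0(N,k) = g_0(\lceil N/k\rceil, k)$ for every $N \ge 1$. Substituting $N = mk$ into Corollary~\ref{cor:f0recursion} then gives the clean identity
\[
g_0(m,k) \;=\; m - 1 \;+\; g_0\!\left(\left\lceil\tfrac{m-1}{k}\right\rceil,\, k\right),
\]
which is the key recursion I would use throughout.

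Next I would compute the difference $d_0(m,k) = g_0(m+1,k) - g_0(m,k)$ using this identity. Writing the two instances and subtracting yields
\[
d_0(m,k) \;=\; 1 \;+\; g_0\!\left(\left\lceil\tfrac{m}{k}\right\rceil,\, k\right) \;-\; g_0\!\left(\left\lceil\tfrac{m-1}{k}\right\rceil,\, k\right).
\]
The rest of the argument is a case analysis on $m-1 \bmod k$. Writing $m-1 = qk + r$ with $0 \le r < k$, a short calculation shows $\lceil m/k\rceil = q+1$ always, while $\lceil (m-1)/k\rceil$ equals $q$ when $r=0$ and equals $q+1$ otherwise.

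In the case $r \ne 0$, i.e.\ $m-1$ is not a multiple of $k$, the two ceilings coincide and the two $g_0$ terms cancel, leaving $d_0(m,k) = 1$. In the case $r = 0$, i.e.\ $m-1 = qk$ with $q = (m-1)/k$, the expression becomes
\[
d_0(m,k) \;=\; 1 \;+\; g_0(q+1,k) - g_0(q,k) \;=\; 1 + d_0\!\left(\tfrac{m-1}{k},\,k\right),
\]
which is exactly the other branch of the recurrence. I do not foresee a real obstacle here: the whole argument is bookkeeping on ceilings. The one spot that deserves care is verifying the identity $f_0(N,k)=g_0(\lceil N/k\rceil,k)$ cleanly from the definition $g_i(m,k)=f_i(mk,k)$ together with the earlier observation that $f_i$ is constant on each block $\{ak+1,\dots,(a+1)k\}$, since that is the step that converts the $f_0$-level recursion of Corollary~\ref{cor:f0recursion} into the $g_0$-level recursion driving the entire proof.
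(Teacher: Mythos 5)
Your argument is correct and is essentially the paper's own proof: both expand $d_0(m,k)=f_0(km+k,k)-f_0(km,k)$ via Corollary~\ref{cor:f0recursion} to get $1+f_0(m,k)-f_0(m-1,k)$ (you just phrase this with $g_0$ throughout), and both finish with the same case analysis on whether $\lceil m/k\rceil$ and $\lceil (m-1)/k\rceil$ coincide, i.e., on whether $k\mid m-1$. The ceiling bookkeeping you flag checks out, so there is no gap.
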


\begin{proof}
We know that
\begin{align*}
d_0(m, k) &= g_0(m+1, k) - g_0(m, k) = f_0(km+k, k) - f_0(km, k) \\
&= m + f_0(m) - (m-1) - f_0(m-1) \\
&= 1 + f_0(m) - f_0(m-1)
\end{align*}
by Corollary~\ref{cor:f0recursion}. Now, since $f_0(x)$ depends only on $\left\lceil \frac{x}{k} \right\rceil$, if we have $\left\lceil \frac{m}{k} \right\rceil = \left\lceil \frac{m-1}{k} \right\rceil$, it means that $m-1$ is not a multiple of $k$. This implies that $d_0(m,k) = 1$. If $m-1$ is a multiple of $k$, then we need to calculate $f_0(m) - f_0(m-1)$. We have $f_0(m) = g_0(\frac{m-1}{k}+1)$ and $f_0(m-1) = g_0(\frac{m-1}{k})$. Therefore, $f_0(m) - f_0(m-1) = g_0(\frac{m-1}{k}+1) - g_0(\frac{m-1}{k}) = d_0(\frac{m-1}{k}, k)$, and $d_0(m, k) = d_0(\frac{m-1}{k}, k) + 1$, as claimed.
\end{proof}

Now, we can state the formula.

\begin{theorem}
\label{thm:d0formula}
The difference sequence $d_{0}(m, k) = g_{0}(m + 1, k) - g_{0}(m, k)$ satisfies the formula
\[d_0(m, k) =\begin{cases}
	n, & \text{ if $m = \frac{k^{n} - 1}{k - 1}$ for some integer $n$},\\
	\nu_{k}((k - 1)m + 1) + 1, & \text{ otherwise}.
	\end{cases}\]
\end{theorem}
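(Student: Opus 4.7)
The plan is to prove the formula by strong induction on $m$, feeding Proposition~\ref{prop:d0recursion} as the engine. The base case will be $m=1 = \frac{k^1-1}{k-1}$, where the claimed value is $n=1$; this can be checked directly using Corollary~\ref{cor:f0recursion}, since $g_0(1,k) = f_0(k,k)=0$ and $g_0(2,k) = f_0(2k,k)=1$. (Equivalently, one may push the recursion one step further and note $d_0(1,k) = d_0(0,k)+1 = 1$.)

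For the inductive step, the natural split mirrors Proposition~\ref{prop:d0recursion}. First I would handle the case $m \not\equiv 1 \pmod k$. Here $m$ cannot equal $\frac{k^n-1}{k-1}$, since every such number is $\equiv 1 \pmod k$, so the claim reduces to showing $\nu_k((k-1)m+1)=0$; but $(k-1)m+1 \equiv 1-m \pmod k$, which is nonzero exactly because $m \not\equiv 1 \pmod k$, and the recursion gives $d_0(m,k)=1$ as required.

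The substantive case is $m \equiv 1 \pmod k$, say $m = kq+1$ with $q \ge 1$, so the recursion gives $d_0(m,k) = d_0(q,k)+1$. Here I split further. If $q = \frac{k^{n-1}-1}{k-1}$ for some $n\ge 1$, then $m = kq+1 = \frac{k^n-1}{k-1}$ and the inductive hypothesis yields $d_0(q,k) = n-1$ (the edge case $n=1$ meaning $q=0$ is consistent with $d_0(0,k)=0$), so $d_0(m,k)=n$, matching the first branch of the formula. If $q$ is not of this form, then neither is $m$, and the inductive hypothesis gives $d_0(q,k) = \nu_k((k-1)q+1)+1$. The key algebraic identity is
\[
(k-1)m + 1 \;=\; (k-1)(kq+1) + 1 \;=\; k\bigl((k-1)q + 1\bigr),
\]
so $\nu_k((k-1)m+1) = \nu_k((k-1)q+1) + 1$, and therefore $d_0(m,k) = d_0(q,k)+1 = \nu_k((k-1)m+1)+1$, as claimed.

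There is no real obstacle here beyond bookkeeping: the whole argument hinges on the clean identity $(k-1)m+1 = k((k-1)q+1)$ when $m = kq+1$, which meshes perfectly with the $k$-adic valuation and with the characterization of $\frac{k^n-1}{k-1}$ as the fixed orbit of the map $m \mapsto (m-1)/k$. The only mild care needed is to match the base cases cleanly so that the formula $n$ for $m=\frac{k^n-1}{k-1}$ is obtained uniformly, including the smallest instance $n=1$.
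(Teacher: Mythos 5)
Your proposal is correct and takes essentially the same route as the paper: both arguments are driven by the recurrence of Proposition~\ref{prop:d0recursion} together with the key identity $(k-1)m+1 = k\bigl((k-1)\frac{m-1}{k}+1\bigr)$, which shifts the $k$-adic valuation by one at each step. The only difference is organizational --- you run a single strong induction on $m$ with the case split inside, whereas the paper runs two separate inductions (on $n$ for $m=\frac{k^n-1}{k-1}$, and on the valuation otherwise) --- and your write-up is, if anything, slightly cleaner about the edge cases.
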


\begin{proof}
If $m = \frac{k^{n} - 1}{k - 1}$, then we can prove the formula based on the recurrence and using induction on $n$. When $n = 1$, we have $m = 1$ and $d_{0}(m, k) = 1$,. Thus, the base of the induction is established. Assume that $d_{0}(\frac{k^{n} - 1}{k - 1}, k) = n$. By the recurrence from Proposition~\ref{prop:d0recursion}, we have
\begin{align*}
d_{0} \left(\frac{k^{n + 1} - 1}{k - 1}, k \right) = d_{0}\left(\frac{k^{n} - 1}{k - 1},k \right) + 1 = n + 1.
\end{align*}
Now, if $m$ cannot be represented as $\frac{k^{n} - 1}{k - 1}$ for any integer $n$, then we can prove the formula based on the recurrence and using induction. Our base case is when $\nu_{k}(m(k - 1) + 1) = 0$. Therefore, we see that $m - 1$ is not a multiple of $k$. The recurrence gives us $d_{0}(m, k) = 1 = \nu_{k}(m(k - 1) + 1) + 1$. Thus, the base of induction is established.

Assume for all $m \neq \frac{k^{n - 1} - 1}{k - 1}$, such that $\nu_{k} \left(m(k - 1) + 1 \right) = n - 1$ is true, then $d_{0}(m, k) = n$ satisfies the recurrence. Consider some $m \neq \frac{k^{n} - 1}{k - 1}$ such that $\nu_{k} \left(m(k - 1) + 1 \right) = n$. Since $n > 0$, we know that $m - 1$ must be a multiple of $k$. By the recurrence we have $d_{0}(m, k) = d_{0}(\frac{m - 1}{k}) + 1$. Since $m \neq \frac{k^{n} - 1}{k - 1}$, we have $\frac{m - 1}{k} \neq \frac{k^{n - 1}}{k - 1}$. Notice that $\nu_{k}(\frac{(m - 1)(k - 1)}{k} + 1) = \nu_{k}(\frac{m(k - 1) + 1}{k}) = n - 1$. Thus, we have that $d_{0}(m, k) = d_{0}(\frac{m - 1}{k}) + 1 = n + 1$.
\end{proof}

In other words, excluding the first digit, if $[m]_k$ ends with $i$ ones, we have $d_0(m,k) = g_0(m+1,k) - g_0(m,k) = i+1$.

\begin{remark*}
If $N \neq \frac{k^{n} - 1}{k - 1}$, then the number of trailing ones in the base $k$ representation of $N$, without the first digit, is equal to the number of trailing zeros of $N - \frac{k^{n} - 1}{k - 1}$ in base $k$ or $\nu_k(N - \frac{k^{n} - 1}{k - 1})$. This is the same as $\nu_k(N(k - 1) + 1)$.
\end{remark*}

\begin{example}
The difference sequence $d_0(m,2)$ is sequence A091090 in the OEIS \cite{OEIS}, as shown in \cite{IKL24}. The latter sequence is defined in the OEIS as the number of editing steps (delete, insert, or substitute) to transform the binary representation of $n$ into the binary representation of $n + 1$. Sequence A091090 starts from index 1 as
\[1,\ 1,\ 2,\ 1,\ 2,\ 1,\ 3,\ 1,\ 2,\ 1,\ 3,\ 1,\ 2,\ 1,\ 4,\ 1,\ 2,\ 1,\ \ldots.\]

The difference sequence $d_0(m,3)$ has been submitted to the OEIS as sequence A378725 and starts as
\[1,\ 1,\ 1,\ 2,\ 1,\ 1,\ 2,\ 1,\ 1,\ 2,\ 1,\ 1,\ 3,\ 1,\ 1,\ 2,\ 1,\ 1,\ 2,\ 1,\ 1,\ 3,\ 1,\ \ldots.\]
% c[n_] := c[n] = Which[n == 1, 1, Mod[n, 3] != 1, 1, True, c[(n - 1)/3] + 1]; Array[c, 103, 1]
%mine:1, 1, 1, 2, 1, 1, 2, 1, 1, 2, 1, 1, 3, 1, 1, 2, 1, 1, 2, 1, 1, 3, 1, 1, 2, 1, 1, 2, 1, 1, 3, 1, 1, 2, 1, 1, 2, 1, 1, 4, 1, 1, 2, 1, 1, 2, 1, 1, 3, 1, 1, 2, 1, 1, 2, 1, 1, 3, 1, 1, 2, 1, 1, 2, 1, 1, 4, 1, 1, 2, 1, 1, 2, 1, 1, 3, 1, 1, 2, 1, 1, 2, 1, 1, 3, 1, 1, 2, 1, 1, 2, 1, 1, 4, 1, 1, 2, 1, 1, 2, 1, 1, 3
%k=3: 1, 1, 1, 2, 1, 1, 2, 1, 1, 2, 1, 1, 3, 1, 1, 2, 1, 1, 2, 1, 1, 3, 1, 1, 2, 1, 1, 2, 1, 1, 3, 1, 1, 2, 1, 1, 2, 1, 1, 4, 1, 1, 2, 1, 1, 2, 1, 1, 3, 1, 1, 2, 1, 1, 2, 1, 1, 3, 1, 1, 2, 1, 1, 2, 1, 1, 4, 1, 1, 2, 1, 1, 2, 1, 1
%k=4: 1, 1, 1, 1, 2, 1, 1, 1, 2, 1, 1, 1, 2, 1, 1, 1, 2, 1, 1, 1, 3, 1, 1, 1, 2, 1, 1, 1, 2, 1, 1, 1, 2, 1, 1, 1, 3, 1, 1, 1, 2, 1, 1, 1, 2, 1, 1, 1, 2, 1, 1, 1, 3, 1, 1, 1, 2, 1, 1, 1, 2, 1, 1, 1, 2, 1, 1, 1, 3, 1, 1, 1, 2, 1, 1
\end{example}

One can notice that these sequences all have a similar structure due to a similar repetition pattern. Each sequence begins with $k$ ones. Afterward, there are $k - 1$ ones between every two numbers that are not equal to 1. Furthermore, for any $m > 1$, if we remove all numbers less than $m$, the resulting sequence starts with $k$ occurrences of $m$, followed by $k - 1$ occurrences of $m$ between every two numbers that are not equal to $m$.

In other words, we can construct the difference sequence $d_0(m,k)$ in the following way. Start by creating a sequence of all $1$'s. Then, replace every $k$th $1$ with a $2$ beginning with the $(k+1)$th $1$. Next, replace every $k$th $2$ with a $3$, starting with the $(k+1)$th $2$. Continue this process indefinitely, replacing every $k$th occurrence of $x-1$ with $x$, beginning with the $(k+1)$th $x-1$. This fractal structure of sequences $d_0$ follows from Proposition~\ref{prop:d0recursion}.

\section{The total number of fires}
\label{sec:totalfires}

\subsection{The total number of fires}

We can also calculate the total number of fires on an infinite undirected $k$-ary tree with a self-loop at the root. Let $F(N,k)$ denote the total number of fires in a $k$-ary tree when we start with $N$ chips placed at the root. As we have done previously, we sometimes remove the parameters in the function $F$ when $N$ and $k$ are fixed.

\begin{theorem}
The total number of fires is
\[F(N,k)=\frac{1}{(k-1)^2} \sum_{m=1}^{n-1} (mk^{m+1} - (m+1) k^m + 1) c_m(N,K),\]
where $n = \lfloor \log_{k}(N(k-1) + 1) \rfloor$.
\end{theorem}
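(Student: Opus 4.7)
The plan is to decompose the total number of fires by layer. Layer $i+1$ contains $k^i$ vertices in the pruned tree of interest, and by Theorem~\ref{thm:fires} each fires $f_i(N,k)$ times. Since vertices beyond layer $n$ never fire, the total is
\[F(N,k) = \sum_{i=0}^{n-1} k^i f_i(N,k) = \sum_{i=0}^{n-1} k^i \sum_{j=1}^{n-i-1} \frac{k^j - 1}{k-1}\, c_{i+j}(N,k).\]

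Next, I would swap the order of summation by substituting $m = i+j$. For fixed $i$, $m$ ranges over $i+1, \ldots, n-1$; reversing, $m$ ranges over $1, \ldots, n-1$ and for each $m$ the index $i$ runs over $0, \ldots, m-1$. This gives
\[F(N,k) = \frac{1}{k-1}\sum_{m=1}^{n-1} c_m(N,k) \sum_{i=0}^{m-1} k^i (k^{m-i} - 1) = \frac{1}{k-1}\sum_{m=1}^{n-1} c_m(N,k) \left( m k^m - \frac{k^m - 1}{k-1}\right),\]
after evaluating the inner geometric sum $\sum_{i=0}^{m-1} k^i = \frac{k^m - 1}{k-1}$ and noting that $\sum_{i=0}^{m-1} k^i \cdot k^{m-i} = m k^m$.

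Finally, I would clear the remaining factor of $k-1$ in the inner expression:
\[\frac{1}{k-1}\left(m k^m - \frac{k^m-1}{k-1}\right) = \frac{m k^m (k-1) - (k^m - 1)}{(k-1)^2} = \frac{m k^{m+1} - (m+1) k^m + 1}{(k-1)^2},\]
which matches the stated formula exactly. The only real obstacle is the bookkeeping in the index swap and the algebraic simplification of the inner sum; everything else is a direct application of Theorem~\ref{thm:fires}. No new idea is required beyond recognizing that summing by layer together with the layer-wise fire count yields a double sum that naturally separates by the layer index $m$ of the chip count $c_m(N,k)$.
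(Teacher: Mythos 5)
Your proposal is correct and follows essentially the same route as the paper's proof: sum $k^i f_i(N,k)$ over layers, substitute the formula from Theorem~\ref{thm:fires}, reindex by $m=i+j$, and evaluate the resulting geometric sums (the difference in the outer summation limit, $n-1$ versus $n-2$, is immaterial since $f_{n-1}=0$). All the algebra checks out.
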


\begin{proof}
Totaling the number of fires for all the vertices, we have
\[F(N,k) = \sum_{i=0}^{n-2} k^i f_i.\]
Substituting $f_i$ from Theorem~\ref{thm:fires}, we get
\[F(N,k) = \sum_{i=0}^{n-2} k^i \sum_{j=1}^{n-i-1} \frac{k^j - 1}{k-1} c_{i+j}.\]

Rearranging and replacing $i+j$ with $m$, we get
\begin{align*}
F(N,k) &= \sum_{i=0}^{n-2} \sum_{j=1}^{n-i-1} \frac{k^{i+j} - k^i}{k-1} c_{i+j} \\
     &= \frac{1}{k-1}\left(\sum_{i=0}^{n-2} \sum_{j=1}^{n-i-1} k^{i+j} c_{i+j} - \sum_{i=0}^{n-2} \sum_{j=1}^{n-i-1} k^{i} c_{i+j}\right) \\
     &= \frac{1}{k-1}\left(\sum_{m=1}^{n-1} mk^{m} c_{m} - \sum_{m=1}^{n-1} (1 + k + k^2 + \cdots + k^{m-1}) c_{m} \right)\\
     &= \frac{1}{k-1} \left(\sum_{m=1}^{n-1} mk^{m} c_{m} - \sum_{m=1}^{n-1} \frac{k^m - 1}{k-1} c_{m} \right)\\
     &= \frac{1}{(k-1)^2} \sum_{m=1}^{n-1} (mk^{m+1} - (m+1) k^m + 1) c_m.
\end{align*}
This concludes the proof.
\end{proof}

For $k=2$, the sequence of the total number of fires was calculated in \cite{IKL24}. It is sequence A376131 in the OEIS. The sequence starts from index 1 as
\[0,\ 1,\ 2,\ 6,\ 7,\ 11,\ 12,\ 23,\ 24,\ 28,\ 29,\ 40,\ 41,\ 45,\ \ldots.\]
For $k=3$, the sequence of the total number of fires starts from index 1 as
\[0,\ 1,\ 2,\ 3,\ 8,\ 9,\ 10,\ 15,\ 16,\ 17,\ 22,\ 23,\ 24,\ 42,\ 43,\ 44,\ 49,\ 50,\ 51,\ \ldots.\]
This sequence has been submitted to the OEIS as sequence A378726.
%k=3: 0, 1, 2, 3, 8, 9, 10, 15, 16, 17, 22, 23, 24, 42, 43, 44, 49, 50, 51, 56, 57, 58, 76, 77, 78, 83, 84, 85, 90, 91, 92, 110, 111, 112, 117, 118, 119, 124, 125, 126, 184, 185, 186, 191, 192, 193, 198, 199, 200, 218, 219, 220, 225, 226, 227, 232, 233, 234, 252, 253, 254, 259, 260, 261, 266, 267, 268, 326, 327, 328, 333, 334, 335, 340, 341
%k=4: 0, 1, 2, 3, 4, 10, 11, 12, 13, 19, 20, 21, 22, 28, 29, 30, 31, 37, 38, 39, 40, 67, 68, 69, 70, 76, 77, 78, 79, 85, 86, 87, 88, 94, 95, 96, 97, 124, 125, 126, 127, 133, 134, 135, 136, 142, 143, 144, 145, 151, 152, 153, 154, 181, 182, 183, 184, 190, 191, 192, 193, 199, 200, 201, 202, 208, 209, 210, 211, 238, 239, 240, 241, 247, 248
%k=5: 0, 1, 2, 3, 4, 5, 12, 13, 14, 15, 16, 23, 24, 25, 26, 27, 34, 35, 36, 37, 38, 45, 46, 47, 48, 49, 56, 57, 58, 59, 60, 98, 99, 100, 101, 102, 109, 110, 111, 112, 113, 120, 121, 122, 123, 124, 131, 132, 133, 134, 135, 142, 143, 144, 145, 146, 184, 185, 186, 187, 188, 195, 196, 197, 198, 199, 206, 207, 208, 209, 210, 217, 218, 219, 220
We have a recursive formula for function $F$, similar to the formula for $f_0$ found in Corollary~\ref{cor:f0recursion}.

\begin{proposition}
We have 
\[F(N) = f_0(N) + kF\bigg(\bigg\lceil \frac{N}{k} \bigg\rceil -1\bigg).\]
\end{proposition}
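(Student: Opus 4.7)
The plan is to decompose $F(N)$ into the contribution of the root itself and the contributions of its $k$ child-subtrees, exploiting the parallel firing strategy already introduced in the proof of the stable-configuration proposition. By global confluence, $F(N)$ does not depend on the firing order, so I am free to use any convenient schedule.

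First, I would separate the fires by vertex: $F(N) = f_0(N) + \sum_{c} (\text{fires in subtree rooted at } c)$, where the sum runs over the $k$ children of the root. The root term is $f_0(N)$ by definition, so the remaining work is to identify the dynamics inside each child-subtree with the chip-firing process on a full independent tree.

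Next, I would invoke the parallel strategy from the stable-configuration proposition: first fire the root $\lceil N/k \rceil - 1$ times (matching the first term in Corollary~\ref{cor:f0recursion}), depositing $\lceil N/k \rceil - 1$ chips at each child; then fire the $k$ children and their subtrees in lockstep, firing the root once whenever all $k$ children have fired. In this schedule, each time a child $c$ fires, exactly one chip goes up to the root, and the very next root fire sends a chip back to $c$. From $c$'s perspective, this round-trip is indistinguishable from a self-loop. Hence the restriction of the process to the subtree rooted at $c$, starting from $\lceil N/k \rceil - 1$ chips, is isomorphic to the chip-firing process on an infinite $k$-ary tree with a self-loop at its root and $\lceil N/k \rceil - 1$ initial chips. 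The total number of fires in that subtree is therefore $F(\lceil N/k \rceil - 1)$. Since all $k$ subtrees are treated symmetrically and start with the same number of chips, summing gives
\[F(N) = f_0(N) + k\,F\!\left(\left\lceil \tfrac{N}{k} \right\rceil - 1\right),\]
as desired.

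The main obstacle is the bookkeeping that justifies the self-loop identification precisely, in particular at the end of the process where the root's final count must be accounted for. This is essentially the same technical point that was handled in the stable-configuration proposition, and I would simply cite that argument: the parallel schedule ensures each child-fire is paired with a matching root-fire returning a chip, so the induced dynamics in each subtree really does coincide with the independent single-tree process. Once that identification is accepted, the additive decomposition of total fires across vertex-disjoint subtrees is immediate.
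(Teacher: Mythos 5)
Your proposal is correct and follows essentially the same route as the paper: decompose the total fires into root fires plus the fires of the $k$ child-subtrees, observe each subtree receives $\lceil N/k \rceil - 1$ chips and behaves as an independent copy of the original process. The paper's proof is just a two-sentence version of this; your extra detail on the self-loop identification via the parallel schedule is a legitimate elaboration of the same argument, not a different one.
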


\begin{proof}
The total number of fires is equal to the number of root fires plus the number of fires in all of the $k$ subtrees with roots on level 2. Each subtree receives $\left\lceil \frac{N}{k} \right\rceil -1$ chips from the root, so each would fire $F\left(\left\lceil \frac{N}{k} \right\rceil -1\right)$ times.
\end{proof}

\subsection{Special case of $N = \frac {k^n-1}{k-1}$}

If we start with $N = \frac {k^n-1}{k-1}$ chips at the root, we have $c_i = 1$ for $0 \leq i < n$. This case is special, so we want to look at the total number of fires for such $N$ separately.

\begin{proposition}
\label{prop:F}
We have
\[F\left(\frac{k^{j+1}-1}{k-1}\right) = \dfrac{(k(n-1) -n-1)k^{n}+k(n+1)-n+1}{(k-1)^3}.\]
\end{proposition}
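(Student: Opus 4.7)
The plan is to prove the closed form by induction on $n$, using the recursion $F(N) = f_0(N) + kF\bigl(\bigl\lceil N/k\bigr\rceil - 1\bigr)$ established in the previous proposition, together with the formula $f_0\bigl(\tfrac{k^n-1}{k-1}\bigr) = \tfrac{k^n - nk + (n-1)}{(k-1)^2}$ from the preceding subsection. The base case $n=1$ gives $N=1$, no vertex can fire, and $F(1)=0$, which matches the proposed formula after simplification.

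For the inductive step, I first show that the recursion, when applied at $N=\tfrac{k^n-1}{k-1}$, drops $n$ by exactly one. Concretely, I would use the identity
\[\frac{k^n-1}{k-1} = k\cdot\frac{k^{n-1}-1}{k-1} + 1,\]
so that $N/k = \tfrac{k^{n-1}-1}{k-1} + \tfrac{1}{k}$; since $1/k\in(0,1)$, we get $\bigl\lceil N/k\bigr\rceil - 1 = \tfrac{k^{n-1}-1}{k-1}$. Writing $F_n := F\bigl(\tfrac{k^n-1}{k-1}\bigr)$, the recursion becomes
\[F_n = \frac{k^n - nk + (n-1)}{(k-1)^2} + k\,F_{n-1}.\]

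Substituting the inductive hypothesis for $F_{n-1}$ and multiplying through by $(k-1)^3$, the task reduces to checking the polynomial identity
\[(k-1)\bigl(k^n - nk + n - 1\bigr) + k\bigl((k(n-2)-n)k^{n-1} + k\cdot n - n + 2\bigr) = (k(n-1) - n - 1)k^n + k(n+1) - n + 1.\]
This is a mechanical expansion: collect the $k^{n+1}$, $k^n$, and constant-in-$k^m$ terms on each side, and verify they match. A second, parallel route would be to plug $c_m = 1$ directly into the total-fires theorem, giving $F_n = \tfrac{1}{(k-1)^2}\sum_{m=1}^{n-1}\bigl(mk^{m+1} - (m+1)k^m + 1\bigr)$, and evaluate this with the standard identities for $\sum m x^m$ and $\sum x^m$; this provides an independent check.

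The main obstacle is purely bookkeeping: the claimed numerator $(k(n-1) - n - 1)k^n + k(n+1) - n + 1$ is not in a symmetric form, so one has to be careful grouping the $k^{n+1}$, $k^n$, and lower-order terms during the induction, and to track the $\pm$ signs when the coefficient $(k(n-2)-n)$ of $k^{n-1}$ in $F_{n-1}$ is multiplied by $k$ and combined with the $k^n$ produced by $f_0$. No conceptual difficulty is expected beyond this algebra.
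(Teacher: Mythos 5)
Your proposal is correct, but it takes a genuinely different route from the paper. The paper proves this proposition by direct computation: it plugs $c_m = 1$ into the general total-fires formula, reducing the claim to evaluating $\frac{1}{(k-1)^2}\sum_{m=1}^{n-1}\bigl(mk^{m+1} - (m+1)k^m + 1\bigr)$ via the standard identities for $\sum_m mk^m$ and $\sum_m k^m$ --- exactly the route you list only as a ``second, parallel check.'' Your primary argument instead runs an induction on $n$ through the recursion $F(N) = f_0(N) + kF\bigl(\lceil N/k\rceil - 1\bigr)$ together with the already-established value $f_0\bigl(\tfrac{k^n-1}{k-1}\bigr) = \tfrac{k^n - nk + (n-1)}{(k-1)^2}$; the key observation that $\bigl\lceil \tfrac{1}{k}\cdot\tfrac{k^n-1}{k-1}\bigr\rceil - 1 = \tfrac{k^{n-1}-1}{k-1}$ is right, and I checked that your displayed polynomial identity for the inductive step does hold, so the argument closes. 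Both ingredients you invoke appear in the paper before this proposition, so the induction is legitimate. What each approach buys: the paper's summation is self-contained given the total-fires theorem but requires the somewhat error-prone evaluation of $\sum_m mk^m$; your induction trades that for a single mechanical polynomial verification, at the cost of leaning on the recursive proposition and the special-case $f_0$ formula. One cosmetic note: the statement writes $F\bigl(\tfrac{k^{j+1}-1}{k-1}\bigr)$ on the left but uses $n$ on the right; you (reasonably, and consistently with the paper's own proof) read the argument as $\tfrac{k^n-1}{k-1}$ throughout.
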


\begin{proof}
Using the formula for the total number of fires, we get that there are a total of
\begin{align*}
 &\sum ^{n-1}_{m=1} \dfrac{mk^{m+1}-mk^m-k^m+1}{(k-1)^2}\\
 &= \dfrac 1{(k-1)^2}\sum ^{n-1}_{m=1} (mk^{m+1}-mk^m-k^m+1) \\
 &= \dfrac 1{(k-1)^2} \left( \sum ^{n-1}_{m=1} mk^{m+1}-\sum ^{n-1}_{m=1} mk^m - \sum ^{n-1}_{m=1}k^m +\sum ^{n-1}_{m=1} 1 \right) \\
 &= \dfrac 1{(k-1)^2} \left( (n-1)k^n - 2 \dfrac{k^n-k}{k-1} + n -1 \right) \\
 &= \dfrac {nk^{n+1}-nk^n-k^{n+1}-k^n+k+nk-n+1}{(k-1)^3} \\
 & = \dfrac{(kn-n-k-1)k^{n}+nk+k-n+1}{(k-1)^3} \\
 & = \dfrac{(k(n-1) -n-1)k^{n}+k(n+1)-n+1}{(k-1)^3} 
\end{align*}
fires.
\end{proof}

For $k=2$, we get
\[F(2^n-1) = (2n-2 -n -1)2^{n} + 2n +2 -n +1 = (n - 3)2^{n} + n + 3.\]
This matches Corollary 3.7 in \cite{MN23}, and it is sequence A045618 in the OEIS, with an index shift.

For $k=3$, we get
\[F\left(\frac{3^n-1}{2}\right) = \frac{1}{8}((3n-3 -n -1)3^{n} + 3n +3 -n +1 )= \frac{1}{8}((2n -4)3^{n} + 2n + 4) = \frac{1}{4}((n -2)3^{n} + n + 2),\]
which is sequence A212337 in the OEIS, with an index shift.

As is often the case in this paper, the difference sequences of our sequences are easier to analyze. For example, for $k=2$, our sequence A045618$(n+1)$ is the sum of the first $n$ terms of sequence A000337, which is a much older sequence that is simpler. Similarly, for $k=3$, the sequence A212337$(n+1)$ is the sum of the first $n$ terms of A014915.

Here, we make a general statement.

\begin{proposition}
The term $F\left(\frac{k^{j+1}-1}{k-1}\right)$ is the sum of the first $j$ terms of the sequence $b(n,k)$, which is defined as $b(1,k) = 1$ and $b(n,k) = nk^{n-1} + b(n-1,k)$.
\end{proposition}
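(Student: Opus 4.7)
The plan is to establish the key telescoping identity
\[
F\left(\frac{k^{j+1}-1}{k-1}\right) - F\left(\frac{k^{j}-1}{k-1}\right) = b(j,k),
\]
and conclude by summation, using the base value $F\left(\frac{k-1}{k-1}\right) = F(1) = 0$ (a single chip at the root cannot fire).

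First, I would unroll the recursion for $b$. Since $b(1,k)=1$ and $b(n,k)=nk^{n-1}+b(n-1,k)$, a short induction gives $b(j,k)=\sum_{i=1}^{j} i k^{i-1}$. Differentiating the geometric identity $\sum_{i=0}^{j} x^{i} = \frac{x^{j+1}-1}{x-1}$ and substituting $x=k$ then yields the closed form
\[
b(j,k)=\frac{jk^{j+1}-(j+1)k^{j}+1}{(k-1)^{2}}.
\]

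Next, I would apply Proposition~\ref{prop:F} twice, once with exponent $j+1$ and once with exponent $j$, and subtract. The key algebraic move is to collect the coefficient of $k^{j}$ in the numerator of the difference and regroup it as $k^{j}\bigl[j(k-1)^{2}-(k-1)\bigr]+(k-1)$. This exposes a hidden factor of $(k-1)$ which cancels one factor in the denominator $(k-1)^{3}$. The difference then simplifies to exactly $\frac{jk^{j+1}-(j+1)k^{j}+1}{(k-1)^{2}}$, matching the closed form for $b(j,k)$.

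Finally, telescoping from $n=1$ to $j$ and using $F(1)=0$ yields
\[
F\left(\frac{k^{j+1}-1}{k-1}\right)=\sum_{n=1}^{j}\left[F\left(\frac{k^{n+1}-1}{k-1}\right)-F\left(\frac{k^{n}-1}{k-1}\right)\right]=\sum_{n=1}^{j} b(n,k),
\]
as claimed. The main obstacle is purely computational: expanding the two instances of Proposition~\ref{prop:F} produces a cubic-order polynomial in $k$ whose cancellations are opaque without careful bookkeeping. The trick is to collect the coefficient of $k^{j}$ and recognize the $j(k-1)^{2}$ pattern early; otherwise the algebra scatters into many monomials before the final factor of $(k-1)$ reappears.
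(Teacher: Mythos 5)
Your proof is correct, but it runs in the opposite direction from the paper's. The paper first derives the closed form $b(n,k)=\frac{k^n((k-1)n-1)+1}{(k-1)^2}$ from the recurrence, then \emph{sums} it directly, evaluating $\sum_{j=1}^n jk^j$ and $\sum_{j=1}^n k^j$ in closed form and checking that the resulting expression coincides with the formula for $F\left(\frac{k^{n+1}-1}{k-1}\right)$ from Proposition~\ref{prop:F}. You instead \emph{difference}: you show that consecutive special values of $F$ satisfy $F\left(\frac{k^{j+1}-1}{k-1}\right)-F\left(\frac{k^{j}-1}{k-1}\right)=b(j,k)$ by subtracting two instances of Proposition~\ref{prop:F}, and then telescope from $F(1)=0$. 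I checked your central cancellation: the numerator of the difference is $(k-1)\bigl[((k-1)j-1)k^{j}+1\bigr]=k^{j}\bigl[j(k-1)^{2}-(k-1)\bigr]+(k-1)$, exactly as you describe, so one factor of $(k-1)$ cancels against the $(k-1)^3$ and the result matches your closed form $\frac{jk^{j+1}-(j+1)k^{j}+1}{(k-1)^2}$, which agrees with the paper's form of $b(j,k)$. Your route avoids the summation identities for $\sum jk^j$ (you only need the derivative-of-geometric-series trick once, to identify $b$), at the price of the polynomial bookkeeping in the subtraction; it also makes the ``partial sums'' structure of the statement more transparent, since $b(j,k)$ is exhibited as the $j$th increment of $F$ along the special inputs $N=\frac{k^{n}-1}{k-1}$ rather than emerging only after both closed forms are compared. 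One small presentational caution: Proposition~\ref{prop:F} as printed uses $j+1$ on the left and $n$ on the right, so when you apply it twice you should state explicitly that you are evaluating at exponents $n=j+1$ and $n=j$; with that made precise, the argument is complete.
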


\begin{proof}
First, we show that the closed form of $b(n,k)$ is
\[b(n,k)=\frac{k^n((k-1)n-1)+1}{(k-1)^2}.\]
The closed form satisfies $b(1,k) = \frac{k((k-1)-1)+1}{(k-1)^2} = 1$, so it is left to prove that it satisfies the recurrence. Indeed, the expression $nk^{n-1} + b(n-1,k)$ is equal to
\begin{align*}
n k^{n-1} + & \frac{k^{n-1}((k-1)(n-1)-1)+1}{(k-1)^2} = \frac{k^{n-1}((k-1)^2n + (k-1)(n-1)-1)+1}{(k-1)^2} \\
	& = \frac{k^{n-1}((k^2-k)n - k)+1}{(k-1)^2} = \frac{k^{n}((k-1)n - 1)+1}{(k-1)^2} .
\end{align*}
The partial sums of sequence $b(n,k)$ are given by
\[\sum_{j=1}^n\frac{k^j((k-1)j-1)+1}{(k-1)^2} = \frac{(k-1)\sum_{j=1}^n j k^j-\sum_{j=1}^nk^j+n}{(k-1)^2}.\]
Now, we have
\[\sum_{j=1}^n j k^j=\sum_{j=1}^n\frac{k^{n+1}-k^j}{k-1} = \frac{k^{n+1} ((k-1)n-1)+k}{(k-1)^2} \quad \text{ and } \quad \sum_{j=1}^n k^j=\frac{k^{n+1}-k}{k-1}.\]
Substituting this gives
\[\frac{(k-1) \cdot\frac{k^{n+1} ((k-1)n-1)+k}{(k-1)^2}-\frac{k^{n+1}-k}{k-1}+n}{(k-1)^2}=\frac{k^{n+1} ((k-1)n-1)+k - k^{n+1}+ k +n(k-1)}{(k-1)^3}.\]
This simplifies to
\[\frac{k^{n+1} (kn - n -2)+ k(n+2) - n}{(k-1)^3},\]
which is $F(N,k)$ for $N = \frac{k^{n+1}-1}{k-1}$. This is the same as the sequences we study for index $n+1$.
\end{proof}

For $k=4$, we get
\[F\left(\frac{4^n-1}{3}\right)=\frac{1}{27}((3n-5)4^n+3n+5).\]
This sequence has been submitted to the OEIS as sequence A378727. Starting from index 1, the first few terms are
\[0,\ 1,\ 10,\ 67,\ 380,\ 1973,\ 9710,\ 46119,\ 213600,\ 970905,\ 4349650,\ 19262731,\ \ldots.\]
%Table[((3 n - 5) 4^n + 3 n + 5)/27, {n, 30}]
%Total number of fires in a 4-ary tree with a self-loop at the root when we start with (4^n-1)/3 chips at the root. This number of chips is interesting because the stable configuration has 1 chip for every vertex at the top n layers.
%Every other number is divisible by 10.
%1, 10, 67, 380, 1973, 9710, 46119, 213600, 970905, 4349650, 19262731, 84507460, 367855997, 1590728630, 6840133103, 29269406760, 124713124449, 529394487450, 2239745908435, 9447655468300, 39745309211461, 166799986198910, 698474942207927, 2918999758480880, 12176398992520233, 50707195804467810, 210835182555418779, 875366327571865300.
These terms are the partial sums of sequence A014916.

One can notice that every odd-indexed term is divisible by 10. Here, we make a general statement.

\begin{proposition}
We have that $F\left(\frac{k^{2j+1}-1}{k-1}\right)$ is divisible by $2(k+1)$ for any non-negative $j$.
\end{proposition}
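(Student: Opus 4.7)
The plan is to exploit a symmetric pairing of terms in a partial-sum representation of $F$. First, I would combine the recurrence $b(m,k) = mk^{m-1}+b(m-1,k)$ (which gives $b(m,k) = \sum_{l=1}^{m} l\,k^{l-1}$) with the previous proposition to write
\[F\left(\frac{k^{2j+1}-1}{k-1}\right) \;=\; \sum_{m=1}^{2j}\sum_{l=1}^{m} l\,k^{l-1}.\]
Swapping the order of summation yields the more convenient form
\[F\left(\frac{k^{2j+1}-1}{k-1}\right) \;=\; \sum_{l=1}^{2j} l\,(2j-l+1)\,k^{l-1}.\]

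Next, I would pair the index $l$ with $2j+1-l$. These pairs $(1,2j),(2,2j-1),\ldots,(j,j+1)$ partition $\{1,\ldots,2j\}$. Writing out both terms of a pair, the coefficients $l(2j-l+1)$ and $(2j+1-l)\,l$ agree, so one can factor out $l(2j+1-l)\,k^{l-1}$, leaving behind the factor $1+k^{2j+1-2l}$.

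Two elementary divisibility facts then close the argument. First, the exponent $2j+1-2l$ is odd, so by the standard identity $a+1 \mid a^{m}+1$ for odd $m$, the factor $k+1$ divides $1+k^{2j+1-2l}$. Second, since $l+(2j+1-l)=2j+1$ is odd, exactly one of $l$ and $2j+1-l$ is even, so the product $l(2j+1-l)$ is even. Each pair therefore contributes a multiple of $2(k+1)$, and summing over the $j$ pairs gives $2(k+1) \mid F\bigl(\tfrac{k^{2j+1}-1}{k-1}\bigr)$.

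The main obstacle I anticipate is simply recognizing that this symmetric pairing is the right move. Attempting a brute-force reduction of the closed form in Proposition~\ref{prop:F} modulo $2(k+1)$ looks unpleasant: the denominator $(k-1)^3$ interacts with $2(k+1)$ in different ways depending on the parity of $k$, which would force separate cases (and one would still need to produce the factor of $2$ by hand when $k$ is odd). The pairing argument extracts the factor of $k+1$ and the factor of $2$ simultaneously and sidesteps these case distinctions entirely.
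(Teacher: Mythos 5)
Your argument is correct, and it takes a genuinely different route from the paper's. You start from the partial-sum representation $F\bigl(\tfrac{k^{2j+1}-1}{k-1}\bigr)=\sum_{m=1}^{2j}b(m,k)$ with $b(m,k)=\sum_{l=1}^{m}l\,k^{l-1}$, swap the order of summation to obtain $\sum_{l=1}^{2j}l(2j+1-l)k^{l-1}$, and pair $l$ with $2j+1-l$: each pair contributes $l(2j+1-l)k^{l-1}\bigl(1+k^{2j+1-2l}\bigr)$, which is divisible by $2$ because $l+(2j+1-l)$ is odd and by $k+1$ because the exponent $2j+1-2l$ is a positive odd integer. This stays entirely within integer arithmetic and extracts both factors of $2(k+1)$ at once. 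The paper instead does exactly the ``brute-force'' reduction you dismiss, and it goes through without the case split you feared: starting from the closed form in Proposition~\ref{prop:F}, it factors a $2$ directly out of the numerator, rewrites the remainder as $(n-1)(k^{2n}-1)-nk(k^{2n-2}-1)$, divides by $k^2-1$, and shows the resulting polynomial is divisible by $(k-1)^2$ by checking that it and its derivative vanish at $k=1$; the denominator $(k-1)^3$ is thus absorbed by polynomial divisibility rather than by modular arithmetic, so the parity of $k$ never enters. Your approach is arguably more elementary (no polynomial division, no double-root criterion) but leans on the partial-sum proposition, whereas the paper's needs only the closed form. If you write yours up, make explicit that the pairs $(l,2j+1-l)$ partition $\{1,\dots,2j\}$ since $2l=2j+1$ has no integer solution, and that $2j+1-2l\ge 1$ for $1\le l\le j$.
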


\begin{proof}
By Proposition~\ref{prop:F},
\[F\left(\frac{k^{2j+1}-1}{k-1}\right)=\frac{(k(2n-2)-2n)k^{2n-1}+k(2n)-2n+2}{(k-1)^3}.\]
We want to prove that the above is divisible by $2(k+1)$. We have
\begin{align*}
\frac{1}{2(k+1)}F\left(\frac{k^{2j+1}-1}{k-1}\right) & = \frac{(k(2n-2)-2n)k^{2n-1}+2nk-2n+2}{2(k-1)^3(k+1)} \\
&=\frac{(n-1)(k^{2n}-1)-nk(k^{2n-2}-1)}{(k-1)^3(k+1)}.
\end{align*}
Dividing the numerator by $k^2-1 = (k-1)(k+1)$ gives
\[(n-1)(k^{2n-2}+k^{2n-4}+\cdots+1)-nk(k^{2n-4}+k^{2n-6}+\cdots+1).\]
We want to prove that this expression is divisible by $(k-1)^2$. It would be sufficient to consider this expression as a function of $k$ and to prove that this expression and its derivative equals 0 at $k=1$. When $k=1$, this expression is equal to $(n-1)n-n(n-1)=0$. The derivative of this expression is
\[(n-1) \sum_{i=0}^{n-1}2ik^{2i-1}-n \sum_{i=0}^{n-2}(2i+1)k^{2i}.\]
When $k=1$, the derivative is
\[2(n-1)\sum_{i=0}^{n-1}i-n\sum_{i=0}^{n-2}(2i+1)=(n-1)(n-1)n-n(n-1)^2=0,\]
finishing the proof.
\end{proof}

For our next example, we consider $k=5$.

\begin{example}
For $k=5$, we get
\[F\left(\frac{5^n-1}{4}\right)=\frac{1}{64}((4n-6)5^n+4n+6)=\frac{1}{32}((2n-3)5^n+2n+3).\]
This sequence has been submitted to the OEIS as sequence A378728, which starts from index 1 as
\[0,\ 1,\ 12,\ 98,\ 684,\ 4395,\ 26856,\ 158692,\ 915528,\ 5187989,\ 28991700,\ \ldots.\]
%Table[((4 n - 6) 5^n + 4 n + 6)/64, {n, 30}]
%1, 12, 98, 684, 4395, 26856, 158692, 915528, 5187989, 28991700, 160217286, 877380372, 4768371583, 25749206544, 138282775880, 739097595216, 3933906555177, 20861625671388, 110268592834474, 581145286560060, 3054738044738771, 16018748283386232, 83819031715393068.
These terms are the partial sums of sequence A014917.
\end{example}

\subsection{Differences and schizophrenic numbers}

Similar to $f_i(N, k),$ notice that if $N \in \{ak + 1, ak + 2, \ldots, (a+1)k\}$, then the root and the other vertices of the tree fire the same number of times. Thus, it makes sense to consider the number of fires as a function of $\left\lceil \frac{N}{k} \right\rceil.$ Therefore, we introduce a new function: \[G(m, k) = F(mk, k).\]
Table~\ref{tab:G} shows the values of $G(m,k)$ for small values of $m$ and $k$.
%We wrote a program to find the values of $G(m, k)$ for $m, k \leq 200$. The data is in this spreadsheet(\url{https://docs.google.com/spreadsheets/d/1na5pTMVTBOVuWGnvlk-DZsx2YFdx1jx0EsIQQgrCgcc/edit?usp=sharing}{https://docs.google.com/spreadsheets/d/1na5pTMVTBOVuWGnvlk-DZsx2YFdx1jx0EsIQQgrCgcc/edit?usp=sharing}).

\begin{table}[ht!]
\begin{center}
\begin{tabular}{|r|r|r|r|r|r|r|r|r|r|r|}
\hline
$k$\textbackslash $m$ & 1 & 2 & 3 & 4 & 5 & 6 & 7 & 8 & 9 & 10 \\
\hline
2 & 0 & 1 & 2 & 6 & 7 & 11 & 12 & 23 & 24 & 28 \\
\hline
3 & 0 & 1 & 2 & 3 & 8 & 9 & 10 & 15 & 16 & 17 \\
\hline
4 & 0 & 1 & 2 & 3 & 4 & 10 & 11 & 12 & 13 & 19 \\
\hline
5 & 0 & 1 & 2 & 3 & 4 & 5 & 12 & 13 & 14 & 15 \\
\hline
6 & 0 & 1 & 2 & 3 & 4 & 5 & 6 & 14 & 15 & 16 \\
\hline
\end{tabular}
\end{center}
\caption{Values of $G(m,k)$ for $1 \le m \le 10$ and $2 \le k \le 6$.}
\label{tab:G}
\end{table}

The function of differences proved to be useful in calculating the number of fires for particular vertices. Thus, we introduce a difference function here as well:
\[D(m,k) = G(m+1,k)-G(m,k).\]
Table~\ref{tab:GDiff} shows $D(m,k)$ for small values of $m$ and $k$.

\begin{table}[ht!]
\begin{center}
\begin{tabular}{| c | c | c | c | c | c | c | c | c | c | c |} 
 \hline
 $k$\textbackslash $m$ & 1 & 2 & 3 & 4 & 5 & 6 & 7 & 8 & 9 & 10 \\ [0.5ex] 
 \hline
 2 & 1 & 1 & 4 & 1 & 4 & 1 & 11 & 1 & 4 & 1 \\
 \hline
 3 & 1 & 1 & 1 & 5 & 1 & 1 & 5 & 1 & 1 & 5 \\
 \hline
 4 & 1 & 1 & 1 & 1 & 6 & 1 & 1 & 1 & 6 & 1 \\
 \hline
 5 & 1 & 1 & 1 & 1 & 1 & 7 & 1 & 1 & 1 & 1 \\
 \hline
 6 & 1 & 1 & 1 & 1 & 1 & 1 & 8 & 1 & 1 & 1 \\
 \hline
 %7 & 1 & 1 & 1 & 1 & 1 & 1 & 1 & 9 & 1 & 1 \\
 %\hline
 %8 & 1 & 1 & 1 & 1 & 1 & 1 & 1 & 1 & 10 & 1 \\
 %\hline
 %9 & 1 & 1 & 1 & 1 & 1 & 1 & 1 & 1 & 1 & 11 \\
 %\hline
\end{tabular}
\end{center}
\caption{Values of $D(m,k)$ for $1 \le m \le 10$ and $2 \le k \le 6$.}
\label{tab:GDiff}
\end{table}

%More data is available at: \url{https://docs.google.com/spreadsheets/d/1rkseEybL-6-9ubdqwet_uHLfTIyKoUmsyFzOeSzu04k/edit?gid=1991249386#gid=1991249386}.
For $k=2$, this is sequence A376132, calculated in \cite{IKL24}. It starts as
\[1,\ 1,\ 4,\ 1,\ 4,\ 1,\ 11,\ 1,\ 4,\ 1,\ 11,\ 1,\ 4,\ 1,\ 26,\ \ldots.\]
The distinct values of this sequence are Eulerian numbers, which form sequence A000295, where A000295$(n) = 2^n - n - 1$.

For $k=3$, we get the sequence $D(n,3)$, which has been submitted to the OEIS as sequence A378962. It starts from index 1 as
\[1,\ 1,\ 1,\ 5,\ 1,\ 1,\ 5,\ 1,\ 1,\ 5,\ 1,\ 1,\ 18,\ 1,\ 1,\ 5,\ 1,\ 1,\ 5,\ \ldots.\]

The difference sequences for the total number of fires show a similar pattern to the difference sequences for root fires. The data show that, for a given $k$, both sequences have ones in the same places. Moreover, when the difference sequence for the number of root fires has $n$ in some place, the difference sequence for the total number of fires has the same number, which can be viewed as a function of $n$.

The sequences of the unique values are summarized in Table~\ref{tab:a}. For precision, we note that sequence A000295 is shifted relative to our indexing: $A000295(2) = 1$. However, the database has a sequence that matches our indexing: A125128, as $A125128(n) = A000295(n+1)$. Also, excluding index 1, sequence A130103 matches A000295. Sequence A000340 is also shifted, but in the opposite direction: $A000340(0) = 1$.

\begin{table}[ht!]
\begin{center}
\begin{tabular}{| c | c | c | c | c | c | c | c | c |} 
 \hline
 $k$\textbackslash $m$ & 1 & 2 & 3 & 4 & 5 & 6 & 7			& A\# \\ 
\hline
 2 & 1 & 4 & 11 & 26 & 57 & 120 & 247 		& A000295\\
 \hline
 3 & 1 & 5 & 18 & 58 & 179 & 543 & 1636 		& A000340 \\
 \hline
 4 & 1 & 6 & 27 & 112 & 453 & 1818 & 7279 		& A014825\\
 \hline
 5 & 1 & 7 & 38 & 194 & 975 & 4881 & 24412 		& A014827\\
 \hline
 6 & 1 & 8 & 51 & 310 & 1865 & 11196 & 67183 		& A014829\\
 \hline
 7 & 1 & 9 & 66 & 466 & 3267 & 22875 & 160132 	& A014830\\
 \hline
 8 & 1 & 10 & 83 & 668 & 5349 & 42798 & 342391 	& A014831\\
 \hline
 9 & 1 & 11 & 102 & 922 & 8303 & 74733 & 672604 	& A014832\\ 
 \hline
10 & 1 & 12 & 123 & 1234 & 12345 & 123456 & 1234567 	& A014824\\ 
 \hline
\end{tabular}
\end{center}
\caption{Unique values for $2 \le k \le 10$.}
\label{tab:a}
\end{table}

%k=2:, 1, 4, 11, 26, A000295
%k=3: 1, 5, 18, 58, 179, 543, 1636, 4916, 14757, 44281, A000340
%k=5: 1, 7, 38, 194, 975, 4881, 24412, 122068, 610349, 3051755, A014827
%k=6: 1, 8, 51, 310, 1865, 11196, 67183, 403106, 2418645, 14511880, A014829
%k=7: 1, 9, 66, 466, 3267, 22875, 160132, 1120932, 7846533, 54925741, A014830
%k=8: 1, 10, 83, 668, 5349, 42798, 342391, 2739136, 21913097, 175304786, A014831 
%k=9: 1, 11, 102, 922, 8303, 74733, 672604, 6053444, 54481005, 490329055, A014832
%k=10: 1, 12, 123, 1234, 12345, 123456, 1234567, 12345678, 123456789, 1234567900, A014824

The last row shows an interesting pattern: the $i$th number for $i<k$ is a concatenation of the first $j$ digits. The same is true for sequences in other bases. These sequences start with $a(1) = 1$ and follow the recursion $a(n) = ka(n-1)+n$.

The square roots of the odd numbers in sequence A014824 are irrational, but they display some interesting patterns that mimic rational numbers.

\begin{example} Consider $a(11,10) = 12345679011$. Its square root starts as
\[111111.11110505555555539054166665767340972160955659283519805.\]
There are three blocks of repeated digits here. The number starts with 10 ones, and then shortly after, there is a block of 8 fives and a block of 4 sixes. Compare this with the square root of $a(19,10) = 1234567901234567899$, which starts as 
\[1111111111.11111111010555555555555555510054166666666666625487909722222222175.\]
There are longer blocks of the same digits and a new block of twos. The blocks with repeated digits decrease in size, and the number of digits between blocks increases.
\end{example}

A \textit{schizophrenic number} or \textit{mock rational number} is an irrational number that displays certain characteristics of rational numbers \cite{Toth20}, as shown in the example above. Such numbers can be similarly defined in other bases. They all begin with blocks of ones in their corresponding bases and seem to appear rational, but then the pattern breaks, and they appear irrational. Later on, they seem to appear rational again with another digit until the pattern breaks again. The blocks of rationality shrink, and the blocks of irrationality lengthen until, eventually, the whole thing disintegrates into the full chaos of irrational numbers. The schizophrenic numbers in base $k$ are square roots of odd numbers in the corresponding sequence.

Peter Bala made an interesting comment on sequence A014824 in the OEIS \cite{OEIS}, suggesting that the inverse of a schizophrenic number also exhibits schizophrenic patterns. These patterns are even more striking as blocks of repeated digits always contain the same digit: zero. Presumably, the same pattern holds for other bases.

\begin{example}
Consider $a(11,10) = 12345679011$ and $a(19,10) = 1234567901234567899$ as in the previous example. The first few digits of $\sqrt{\frac{1}{a(11,10)}}$ and $\sqrt{\frac{1}{a(19,10)}}$ are
\[0.0000090000000004905000000400983750036422690628473814118700156165\]
and
\[0.0000000009000000000000000008145000000000000011056837500000000016.\]
There are blocks of repeated zeros. Ignoring the initial zeros, the block sizes decrease, and the number of digits between blocks increases. The pattern is similar to the schizophrenic pattern, except that the blocks always contain zeros.
\end{example}

We define functions $a(n,k)$ for all $k \geq 2$ as $a(n,k) = ka(n - 1,k) + n$ for $n \geq 2$ and $a(1,k) = 1$. This is the same recurrence mentioned in the sequences in Table~\ref{tab:a}. This recursion explains why the first $k$ terms of each sequence written in base $k$ are the concatenations of the first $k$ digits.

We first get the closed formula for $a(n,k)$.

\begin{lemma}
\label{lem:ank}
We have that
\[a(n,k) = \frac{k^{n + 1} - (k - 1)n - k}{(k - 1)^2}.\]
\end{lemma}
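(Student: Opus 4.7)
The plan is to prove the closed form by straightforward induction on $n$, using the defining recurrence $a(n,k) = ka(n-1,k) + n$ together with the initial value $a(1,k) = 1$. There is no need for a more sophisticated technique here; the recurrence is a first-order linear recurrence with a constant multiplier, so induction is the cleanest route. (One could alternatively solve it via an ansatz $a(n,k) = C k^n + An + B$ and match coefficients against the recurrence and the initial condition, but this ends up doing essentially the same algebra.)

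For the base case, I would plug $n=1$ into the claimed formula and simplify the numerator $k^2 - (k-1) - k = (k-1)^2$, which cancels against the denominator to give $1 = a(1,k)$. For the inductive step, I would assume that
\[a(n-1,k) = \frac{k^{n} - (k-1)(n-1) - k}{(k-1)^2},\]
multiply through by $k$, add $n$, and put everything over the common denominator $(k-1)^2$. The numerator becomes
\[k^{n+1} - k(k-1)(n-1) - k^2 + n(k-1)^2.\]
The goal is then to show that this simplifies to $k^{n+1} - (k-1)n - k$.

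The main (and only) obstacle is keeping the bookkeeping of the algebraic expansion straight. Expanding $-k(k-1)(n-1) = -nk^2 + k^2 + nk - k$ and $n(k-1)^2 = nk^2 - 2nk + n$, the $nk^2$ terms cancel, the $k^2$ terms cancel, and what survives among the lower-order terms is $nk - 2nk - k + n = -(k-1)n - k$. Combined with the $k^{n+1}$ term this gives exactly the claimed numerator, closing the induction. Since the lemma is stated as a standalone identity, this short induction will be the complete proof, and the subsequent results in the paper can then invoke the closed form directly.
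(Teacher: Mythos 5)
Your proof is correct and takes essentially the same approach as the paper: induction on $n$ using the recurrence $a(n,k) = ka(n-1,k) + n$, with the same algebraic simplification of the numerator. The only difference is cosmetic (you step from $n-1$ to $n$ rather than from $n$ to $n+1$), and your expansion of $-k(k-1)(n-1) + n(k-1)^2$ checks out.
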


\begin{proof}
We prove this with induction on $n$. The base case is $n = 1$, and the formula gives $a(1,k) = \frac{k^{2} - 2k + 1}{(k - 1)^2} = 1$, which is true by definition.

Assume $a(n,k) = \frac{k^{n + 1} - (k - 1)n - k}{(k - 1)^2}$. The recursion tells us that $a(n + 1,k) = a(n,k) + n + 1$. Thus,
\begin{align*}
a(n + 1,k) &= \frac{k^{n + 2} - (k^2 - k)n - k^2 + (k - 1)^2n + (k - 1)^2}{(k - 1)^2} \\
&= \frac{k^{n + 2} - (k - 1)(n + 1) - k}{(k - 1)^2},
\end{align*}
concluding the proof.
\end{proof}

We are now ready to prove the observation about the differences mentioned earlier.

\subsection{Formulae for differences}

In Proposition~\ref{prop:d0recursion} and Theorem~\ref{thm:d0formula}, we provided the recurrence and the formula for the function $d_0$. In this section, we do the same for function $D$.

\begin{proposition}
The difference sequence $D(m, k) = G(m+1, k) - G(m, k)$ satisfies the recurrence
\[D(m, k) =\begin{cases}
	d_0(m) + kD(\frac{m-1}{k}), & \text{ if $m-1$ is a multiple of $k$},\\
	1, & \text{ otherwise}.
	\end{cases}\]
\end{proposition}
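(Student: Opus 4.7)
The plan is to apply the recursion $F(N) = f_0(N) + kF(\lceil N/k \rceil - 1)$ (proved just before this subsection) to both $N = (m+1)k$ and $N = mk$, then subtract. Since $\lceil mk/k \rceil = m$, this yields
\[
D(m,k) \;=\; F((m+1)k) - F(mk) \;=\; \bigl[f_0((m+1)k) - f_0(mk)\bigr] \;+\; k\bigl[F(m) - F(m-1)\bigr].
\]
The first bracket is exactly $d_0(m,k)$ by the definitions $g_0(m,k) = f_0(mk,k)$ and $d_0(m,k) = g_0(m+1,k) - g_0(m,k)$. So the whole task reduces to analyzing the second bracket.

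The crucial structural fact (noted just before the definition of $G$) is that $F$ is constant on each block $\{ak+1, ak+2, \ldots, (a+1)k\}$, so $F(j) = G(\lceil j/k \rceil, k)$ for every positive integer $j$. I would then split into the two cases of the statement.

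Case A, when $m-1$ is not a multiple of $k$: Writing $m = qk + r$ with $2 \le r \le k$, both $m$ and $m-1$ lie in the block $\{qk+1, \ldots, (q+1)k\}$, so $F(m) = F(m-1)$ and the second bracket vanishes. This leaves $D(m,k) = d_0(m,k)$, and Proposition~\ref{prop:d0recursion} says $d_0(m,k) = 1$ in this case, yielding $D(m,k) = 1$ as claimed.

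Case B, when $m - 1 = qk$: Then $m-1 = qk$ sits at the top of the block indexed by $q$, while $m = qk+1$ opens the block indexed by $q+1$. Hence $F(m-1) = G(q,k)$ and $F(m) = G(q+1,k)$, so $F(m) - F(m-1) = G(q+1,k) - G(q,k) = D(q,k) = D\bigl(\tfrac{m-1}{k},k\bigr)$. Substituting back gives $D(m,k) = d_0(m,k) + kD\bigl(\tfrac{m-1}{k},k\bigr)$, as required. The only subtle point is making sure the boundary case $m=1$ (where $m-1=0$ is trivially a multiple of $k$) is handled, which it is once one interprets $D(0,k) = G(1,k) - G(0,k) = 0$; everything else is bookkeeping around the block structure of $F$.
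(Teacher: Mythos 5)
Your proposal is correct and follows essentially the same route as the paper: both decompose $D(m,k)$ via the recursion $F(N) = f_0(N) + kF(\lceil N/k\rceil - 1)$ into $d_0(m,k) + k\bigl(F(m)-F(m-1)\bigr)$ and then split on whether $k \mid m-1$. Your treatment is slightly more explicit about the block structure of $F$ (and the $m=1$ boundary), but the argument is the same.
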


\begin{proof}
Assume that the second parameter of all functions is $k$. We have
\begin{align*}
  D(m) &= G(m+1) - G(m) = F(km+k) - F(km) \\
  &= f_0(km+k) - f_0(km) + k(F(m) - F(m-1)) \\
  &= d_0(m) + k(F(m) - F(m-1)).
\end{align*}
Note that $F(m) - F(m-1) \neq 0$ if and only if $k$ divides $m-1$. In this case, we have
\begin{align*}
  D(m) &= d_0(m) + k(F(m) - F(m-1)) \\
  &= d_0(m) + k\left(G\left(\frac{m-1}{k} + 1\right) - G\left(\frac{m-1}{k}\right)\right) \\
  &= d_0(m) + kD \left ( \frac{m - 1}{k} \right).
\end{align*}
If $k$ does not divide $m-1$, then $F(m) - F(m-1) = 0$ and $d_0(m) = 1$. Therefore, 
\[D(m) = d_0(m) + k(F(m) - F(m-1)) = 1,\]
concluding the proof.
\end{proof}

Using the above proposition, we can prove the following theorem.

\begin{theorem}
We have
\[D(m, k) = G(m+1, k) - G(m, k) = a(d_0(m, k), k).\]
\end{theorem}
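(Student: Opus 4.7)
The plan is to prove the theorem by induction on $m$, using the three recurrences already at our disposal: the recurrence for $D(m,k)$ from the preceding proposition, the recurrence for $d_0(m,k)$ from Proposition~\ref{prop:d0recursion}, and the defining recurrence $a(n,k) = k \cdot a(n-1,k) + n$ with $a(1,k) = 1$.

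For the base case, I would take any $m$ such that $m-1$ is not a multiple of $k$. In that case the recurrences immediately give $D(m,k) = 1$ and $d_0(m,k) = 1$, so the right-hand side equals $a(1,k) = 1$, matching. This handles both the literal base $m=1$ and all ``non-divisible'' $m$ at once.

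For the inductive step, I would assume the identity holds for all values smaller than $m$ and consider $m$ with $k \mid m-1$. Writing $m' = (m-1)/k$, the recurrences give
\[
D(m,k) = d_0(m,k) + k \cdot D(m',k), \qquad d_0(m,k) = d_0(m',k) + 1.
\]
By the induction hypothesis, $D(m',k) = a(d_0(m',k), k)$, so
\[
D(m,k) = d_0(m,k) + k \cdot a(d_0(m',k), k) = \bigl(d_0(m',k)+1\bigr) + k \cdot a(d_0(m',k), k).
\]
Now I would apply the recurrence for $a$ with $n = d_0(m',k)+1 = d_0(m,k)$, which reads $a(n,k) = k \cdot a(n-1,k) + n$. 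Plugging this in turns the right-hand side into $a(d_0(m,k),k)$ exactly, closing the induction.

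I do not anticipate a significant obstacle: the whole argument is a clean alignment of three recurrences, and the mild subtlety is only to note that when $k \mid m-1$ the quantity $m'$ is a positive integer strictly smaller than $m$ (provided $m > 1$), so the induction hypothesis is genuinely available. The closed form from Lemma~\ref{lem:ank} is not needed for the proof; the recurrence for $a$ is what does the work.
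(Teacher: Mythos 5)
Your proposal is correct and follows essentially the same route as the paper: both arguments combine the recurrence for $D$, the recurrence for $d_0$, and the recurrence $a(n,k)=ka(n-1,k)+n$ in exactly the same way, the only cosmetic difference being that you induct on $m$ while the paper inducts on the value of $d_0(m,k)$.
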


\begin{proof}
We prove this using induction on the value of $d_0(m, k)$. If $d_0(m) = 1$, then $m-1$ is not a multiple of $k$. Thus, $D(m,k) = 1 = a(1, k)$, as expected. We assume that we have proven the statement for all $d_0(m) = d$ for some value $d$, and we need to prove the statement for $d_0(m) = d+1$. This means that $m-1$ is a multiple of $k$, so we can substitute:

\[D(m) = d_0(m) + kD\left(\frac{m-1}{k}\right).\]

Note that $d_0(\frac{m-1}{k}) = d_0(m) - 1 = d$. Therefore, by induction, we know that

\[D\left(\frac{m-1}{k}\right) = a\left(d_0\left(\frac{m-1}{k}, k\right), k\right) = a(d, k).\]
Thus,

\[D(m) = d+1 + ka(d, k) = a(d+1, k) = a(d_0(m, K), k).\]
\end{proof}

Now, we can use Lemma~\ref{lem:ank} to produce an explicit formula.

\begin{corollary}
We have 
\[D(m, k) = G(m+1, k) - G(m, k) = \frac{k^{j + 1} - (k - 1)j - k}{(k - 1)^2},\]
where $j = \nu_k(m - \frac{k^{n}-1}{k-1}) + 1$, if $m \neq \frac{k^{n} - 1}{k - 1}$. Otherwise, $j = n$.
\end{corollary}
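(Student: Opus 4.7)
The plan is to simply compose three earlier results: the immediately preceding theorem $D(m,k) = a(d_0(m,k),k)$, the closed form from Lemma~\ref{lem:ank} giving $a(n,k) = \frac{k^{n+1}-(k-1)n-k}{(k-1)^2}$, and Theorem~\ref{thm:d0formula} describing $d_0(m,k)$ in terms of $\nu_k$. The whole proof is essentially a substitution, so no inductive argument is required.

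First, I would set $j := d_0(m,k)$ and substitute into the formula from Lemma~\ref{lem:ank} via the preceding theorem, obtaining
\[D(m,k) \;=\; a(d_0(m,k),k) \;=\; \frac{k^{\,d_0(m,k)+1} - (k-1)\,d_0(m,k) - k}{(k-1)^2},\]
which already has the right shape; what remains is merely to identify $d_0(m,k)$ with the $j$ stated in the corollary.

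Next I would split on the two cases of Theorem~\ref{thm:d0formula}. If $m = \frac{k^n-1}{k-1}$ for some integer $n$, then $d_0(m,k) = n$, matching the second clause of the definition of $j$ in the corollary. Otherwise, $d_0(m,k) = \nu_k((k-1)m+1)+1$, and by the remark following Theorem~\ref{thm:d0formula} we have $\nu_k((k-1)m+1) = \nu_k\!\left(m - \tfrac{k^n-1}{k-1}\right)$ (where $n = \lfloor \log_k(m(k-1)+1)\rfloor$), so $d_0(m,k) = \nu_k\!\left(m - \tfrac{k^n-1}{k-1}\right)+1$, matching the first clause.

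The only thing to double-check is the remark: that $\nu_k(m-\tfrac{k^n-1}{k-1}) = \nu_k((k-1)m+1)$ when $m \neq \tfrac{k^n-1}{k-1}$. This is because $(k-1)m+1 = (k-1)(m-\tfrac{k^n-1}{k-1}) + k^n$, and for $m$ in the relevant range the term $m-\tfrac{k^n-1}{k-1}$ has strictly fewer trailing zeros in base $k$ than $k^n$, so the $k$-adic valuations coincide (and the factor $k-1$ is coprime to $k$). This is the only substantive point, and it is already handled in the remark; the rest is bookkeeping.
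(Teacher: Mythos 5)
Your proposal is correct and is exactly the argument the paper intends: the corollary is stated there without proof as an immediate substitution of Theorem~\ref{thm:d0formula} and Lemma~\ref{lem:ank} into the identity $D(m,k)=a(d_0(m,k),k)$, with the identification $\nu_k((k-1)m+1)=\nu_k\bigl(m-\tfrac{k^n-1}{k-1}\bigr)$ supplied by the remark following Theorem~\ref{thm:d0formula}. Your verification of that remark via $(k-1)m+1=(k-1)\bigl(m-\tfrac{k^n-1}{k-1}\bigr)+k^n$ and the bound $m-\tfrac{k^n-1}{k-1}<k^n$ is sound.
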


\section{Acknowledgments} 

This project was completed as part of the MIT PRIMES STEP program, and we are grateful to the program and its staff.

\end{document}